\newcommand{\R}{\mathds R}
\newcommand{\Pp}{\mathds P}
\newcommand{\Ee}{\mathds E}
\newcommand{\I}{\mathds 1}
\def\aa{\alpha}
\def\d{{\rm d}}
\def\<{\langle}
\def\>{\rangle}
 \def\ss{\sqrt}
\def\R{\mathbb R}   \def\ss{\sqrt} 
 \def\kk{\kappa} 
  \def\vv{\varepsilon} 
\def\<{\langle} \def\>{\rangle}  
  \def\nn{\nabla}  
\def\d{\text{\rm{d}}}  \def\aa{\alpha} 
  \def\si{\sigma} 
 \def\beq{\begin{equation}}  
\def\e{\text{\rm{e}}}  \def\OO{\Omega}  
 \def\P{\mathbb P}
\def\to{\rightarrow}
\def\8{\infty}\def\3{\triangle}
\def\1{\lesssim}
\renewcommand{\bar}{\overline}
\renewcommand{\hat}{\widehat}
\newtheorem{theorem}{Theorem}[section]
\newtheorem{proposition}[theorem]{Proposition}
\theoremstyle{definition}
\newtheorem{example}[theorem]{Example}
\newtheorem{remark}[theorem]{Remark}
\numberwithin{equation}{section}
\begin{document}
\allowdisplaybreaks

\title[L\'{e}vy driven Langevin dynamics with singular potentials] {Exponential ergodicity of L\'{e}vy driven Langevin dynamics with singular potentials}

\author{
Jianhai Bao\qquad Rongjuan Fang\qquad
Jian Wang}
\date{}
\thanks{\emph{J.\ Bao:} Center for Applied Mathematics, Tianjin University, 300072  Tianjin, P.R. China. \url{jianhaibao@tju.edu.cn}}
\thanks{\emph{R.\ Fang:}
School  of Mathematics and Statistics, Fujian Normal University, 350007 Fuzhou, P.R. China. \url{fangrj@fjnu.edu.cn}}

\thanks{\emph{J.\ Wang:}
School  of Mathematics and Statistics \&  Fujian Key Laboratory of Mathematical
Analysis and Applications (FJKLMAA) \&  Center for Applied Mathematics of Fujian Province (FJNU), Fujian Normal University, 350007 Fuzhou, P.R. China. \url{jianwang@fjnu.edu.cn}}

\maketitle

\begin{abstract} In this paper, we address exponential ergodicity for L\'{e}vy driven Langevin dynamics with singular potentials, which can be used to model
the time evolution of a molecular system
consisting of $N$ particles moving in $\R^d$
and subject to discontinuous stochastic forces. In particular, our results  are applicable to the singular setups concerned with not only the Lennard-Jones-like interaction potentials but also the Coulomb potentials. In addition to Harris' theorem,
the approach is based on novel constructions of proper Lyapunov functions (which are completely different from the setting for Langevin dynamics driven by Brownian motions), on invoking    the H\"{o}rmander theorem for non-local operators and on solving the issue on an approximate controllability of the associated deterministic system  as well as on exploiting  the time-change idea.

\medskip

\noindent\textbf{Keywords:} Langevin dynamic; L\'evy noise; singular potential; exponential ergodicity; Lyapunov function

\smallskip

\noindent \textbf{MSC 2020:}  60K35,  37A25, 60J76
\end{abstract}
\section{Introduction and Main Result}
In physics, the Langevin dynamics is used to model
the time evolution of a molecular system
consisting of $N$ particles moving in $\R^d$, and it can be described mathematically by the following degenerate SDE
on $\R^{Nd}\times \R^{Nd}:=(\R^d)^N\times (\R^d)^N$:
\begin{equation}\label{E0}
\begin{cases}
\d   {\bf X}_t=  \nn _{{\bf V}}H( {\bf X}_t,{\bf V}_t) \,\d t,\\
\d {\bf V}_t=- \big(F( {\bf X}_t,{\bf V}_t)\nn _{{\bf V}}H( {\bf X}_t,{\bf V}_t)+\nn _{{\bf X}}H( {\bf X}_t,{\bf V}_t)\big)\,\d t+\d  {\bf Z}_t,
\end{cases}
\end{equation}
where ${\bf X}_t:=\big(X_t^{(1)},X_t^{(2)},\cdots, X_t^{(N)}\big)\in  \R^{Nd} $ and $ {\bf V}_t:=\big(V_t^{(1)},V_t^{(2)},\cdots, V_t^{(N)}\big)\in \R^{Nd} $
 represent the positions   and the velocities of $N$ particles, respectively;  $H:\R^{Nd}\times \R^{Nd}\to\R$ means the Hamiltonian function; $F:\R^{Nd}\times \R^{Nd}\to[0,\8]$ refers to the damping coefficient; and $({\bf Z}_t)_{t\ge0}$ is an $\R^{Nd}$-valued noise process. For  detailed  physical backgrounds as well as  more applications in mechanics, the readers are referred to the monograph \cite{Soize}.
In particular, when the Hamiltonian energy $H(x,v)=\frac{1}{2}|v|^2+U(x)$ for a smooth $U:\R^{Nd}\to\R $ and the damping coefficient $F$ is constant (i.e., $F(x,v)=\gamma$ for some positive $\gamma$), \eqref{E0} reduces to
\begin{equation}\label{E1}
\begin{cases}
\d   {\bf X}_t=  {\bf V}_t \,\d t,\\
\d {\bf V}_t=-\big(\gamma   {\bf V}_t +\nn U({\bf X}_t)\big)\,\d t+\d  {\bf Z}_t.
\end{cases}
\end{equation}
Herein,
 $-\gamma {\bf V}_t$ stands for the damping force with the magnitude $\gamma>0$ of the friction arising from the thermal medium; $\nn$ is the gradient operator on $\R^{Nd}$   and
  $U: \R^{Nd} \to [0,\8]$ denotes the potential energy,
which might incorporate the confining potential due to external forces and the interaction potential via repulsive forces.

In the past few decades, the long time behavior (for example, the exponential  ergodicity)  of \eqref{E1} with a single particle (i.e., $N=1$)
has been developed considerably. In case that
 the potential $U$ is polynomial-like  and the driven noise $({\bf Z}_t)_{t\ge0}$ is a $d$-dimensional Brownian motion, the geometric ergodicity under the total variation distance  was treated in, for instance, \cite{MSH,Talay,Wu} via Harris' theorem (e.g. \cite[Theorem 1.2]{HMa}), and
  \cite{EGZ} with the aid of a reflection coupling approach. Besides the previous two probabilistic methods, an important analytical tool on investigating exponential convergence of \eqref{E1} with a $C^2$-potential $U$ to equilibrium in $H^1(\mu)$ or $H^2(\mu)$ is
the hypocoercivity theory  initiated by Villani \cite{Villani}.

From the point of view   on  statistical mechanics, the interaction potentials, characterizing the repulsive forces,  exhibit certain singular features since the interactions increase dramatically   when particles approach each other. Among the singular  interaction potentials,
the Lennard-Jones potential and the Coulomb potential are two typical candidates. In contrast to the setting that the potential term $U$ is regular,
it is much more challengeable to investigate the long time behavior of \eqref{E1}
 due to the involvement of repulsive potentials. All the same, great progresses have been made concerning the exponential ergodicity of \eqref{E1} when the underlying noise $({\bf Z}_t)_{t\ge0}$ is a standard Brownian motion. Based on a delicate construction of Lyapunov function and a perfect  application of Harris' theorem, exponential ergodicity under the weighted total variation distance was explored in \cite{HM} for admissible potentials, which include the Lennard-Jones-like interaction potential but exclude the Coulomb potential. In \cite{LM}, the authors go further and construct another novel Lyapunov function to tackle  the geometric ergodicity of Langevin dynamics with Coulomb interactions by examining the criteria on Harris' theorem.
Furthermore, in case that  the repulsive potential satisfies a global integrability condition,  the exponential ergodicity under the weighted total variation distance was considered in \cite{SX} by taking advantage of Zvonkin's transform  to remove the singular potential, where not only the Lennard-Jones potential but also the Coulomb potential are exclusive totally.
In the meantime, $L^2$-exponential  ergodicity  of \eqref{E1} with singular potentials has also received much attention; see, for example,
\cite{CHS,GS} for more details.
Additionally, by developing the trick on the construction of  Lyapunov functions in  \cite{HM,LM}, the existence and the uniqueness of quasi-stationary distribution
 for hypoelliptic Hamiltonian dynamics with singular potentials were  addressed in \cite{GNW}.

For the past twenty years,
 there are increasing attentions paid into
 regularity properties and ergodic properties of fundamental solutions for kinetic Fokker-Planck operator associated with
  Langevin dynamics \eqref{E0} driven by L\'evy noises $({\bf Z}_t)_{t\ge0}$ for the case  $N=1$. In particular,
a series of papers (e.g., \cite{Zhang4, Zhang6, Zhang3, Zhang2, Zhang1, Zhang, Zhang5}) due to  Zhang and his coauthors  are devoted to H\"{o}rmander's theorem for non-local operators,
which are closely linked  with hypoellipticity theory of fractional kinetic equations or (linear models) of the spatially inhomogeneous Boltzmann equations without an angular cutoff; see e.g. \cite{A,Chen,MX}.
Furthermore, yet under the assumption that the potential $U$ is regular nonetheless the underlying noise $({\bf Z}_t)_{t\ge0}$ is a $d$-dimensional pure jump L\'evy process,
the exponential ergodicity under a multiplicative type quasi-Wasserstein distance of the Markov process solved  by  \eqref{E0}  was treated   in \cite{BW} by invoking a refined basic coupling method. Actually, motivated by sampling from the
heavy-tailed distribution arising in various applications including statistical machine learning \cite{SZT} and  statistical physics study \cite{CD},
the long time behaviors of the SDE \eqref{E1} with
 an $\alpha$-stable L\'evy motion
(instead of a Brownian
motion) will  play more appropriate and important roles.

As mentioned above, there is a huge amount of literature concerned with the long term behavior of \eqref{E1} when the driven noise is a standard Brownian motion.  Nevertheless, in some occasions, the stochastic system under consideration might be subject to a discontinuous stochastic force
rather than a continuous version. So,
strongly inspired by the motivations above, it should be
indispensable and
interesting to investigate the ergodicity of the SDE \eqref{E1} driven by L\'evy noises in the $N$-particles framework with $N>1$, in particular, when the potential term $U(x)$ is singular due to the interactions among the particles. So far, the research in this aspect is still vacant, and therefore we intend to proceed to close the corresponding gap.

In detail, throughout this paper we always assume that $  {\bf Z}_t:=(Z_t^{(1)},\cdots,Z_t^{(N)})\in \R^{Nd} , $
where $ (Z_t^{(1)})_{t\ge0}$, $\cdots,$ $(Z_t^{(N)})_{t\ge0}$ are mutually independent
$d$-dimensional    pure jump L\'{e}vy processes, defined on a common  probability space $(\OO,\mathscr F,\Pp)$,  with the L\'{e}vy measures $\nu^{(1)}(\d u),\cdots, \nu^{(N)}(\d u)$, respectively. Let $P_t((x,v),\cdot)$ be the transition kernel of the process  $({\bf X}_t,{\bf V}_t)_{t\ge0}$ determined  by \eqref{E1}.

\ \

Below, to avoid  writing   down intricate assumptions on the potential term $U$ and, most importantly of all,  state succinctly the contribution of this paper,
we focus on the settings related to  the Lennard-Jones-like potential and the Coulomb potential (as two very typical representatives of singular potentials),  rather than much more general potentials,
 to present  the main result of the present paper.

\begin{theorem}\label{thm1} Let the driven noise $ ( {\bf Z}_t)_{t\ge0}:=((Z_t^{(1)},\cdots,Z_t^{(N)}))_{t\ge0}$ be so that, for any $1\le i\le N$,
$(Z^{(i)}_t)_{t\ge0}$ is a $d$-dimensional $($rotationally invariant$)$  symmetric $\alpha_i$-stable L\'{e}vy process with $\alpha_i\in (0,2)$, and $ (Z_t^{(1)})_{t\ge0}$, $\cdots,$ $(Z_t^{(N)})_{t\ge0}$ are mutually independent.
Suppose one of the following conditions holds:
\begin{itemize}
\item[{\rm (i)}] $(${\bf the Lennard-Jones-like potential}\,$)$ there are constants $c_0,c_1,c_2>0$ so that for all $x\in \R^{Nd}$,
$$
U(x)=c_0\sum_{i=1}^N (1+|x^{(i)}|^2)^{\alpha/2} +\sum_{1\le i<j\le N}U_I(x^{(i)}-x^{(j)}),
$$ where  $\alpha\ge2$, and for a non-zero $u\in\R^d,$ $$U_I(u):=c_1|u|^{-12}-c_2 |u|^{-6}.$$
\item [{\rm (i)}]  $(${\bf the Coulomb potential}\,$)$ there is a constant $c_0>0$ so that for all $x\in \R^{Nd}$,
$$
U(x)=c_0\sum_{i=1}^N (1+|x^{(i)}|^2)^{\alpha/2} +\sum_{1\le i<j\le N}U_I(x^{(i)}-x^{(j)}),
$$ where $\alpha\ge2$, and for a non-zero $u\in\R^d,$
  $$U_I(u):=|u|^{2-d},\quad d\ge 3;\,\, U_I(u):=-\log |u|,\quad d=2.$$\end{itemize}
Then, the process $({\bf X}_t,{\bf V}_t)_{t\ge0}$ given by \eqref{E1} is exponentially ergodic in the sense that the process $({\bf X}_t,{\bf V}_t)_{t\ge0}$ has a  unique invariant probability measure $\mu$, and that there are a constant $\lambda$ and a positive function $C(x,v)$ so that for all $(x,v)\in \mathscr{K}:=\mathscr D(U)\times\R^{Nd}$ and $t>0$,
$$\|P_t((x,v),\cdot)-\mu\|_{V}\le C(x,v)\e^{-\lambda t},$$ where, for a signed measure $\mu_0$,
$ \|\mu_0\|_V=\sup_{|f|\le V}|\mu_0 (f)|$, and $V(x,v)\ge 1$ for all $(x,v)\in \mathscr{K}$ satisfying that
$$V(x,v)\simeq \left(|v|^2+U(x)\right)^{\theta/2}\quad \hbox{ as   } |v|^2+U(x) \to \infty$$
with $\theta\in (0, \min_{1\le i\le N}\alpha_i).$
\end{theorem}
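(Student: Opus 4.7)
The plan is to verify the hypotheses of Harris' theorem on the admissible region $\mathscr{K}=\mathscr D(U)\times \R^{Nd}$: a Foster--Lyapunov drift inequality for a function $V$ with $V(x,v)\simeq (|v|^2+U(x))^{\theta/2}$ at infinity, together with a minorization condition on its sublevel sets $\{V\le R\}\subset \mathscr{K}$. The first ingredient needs a careful use of It\^o's formula for the L\'evy-driven dynamics in a neighbourhood of the singular set of $U$; the second rests on H\"ormander's theorem for non-local operators (in the spirit of the work of Zhang and his coauthors cited in the introduction) combined with approximate controllability of the associated deterministic control system.

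For the Lyapunov step I would borrow the structural device from \cite{HM,LM} --- modify the Hamiltonian by a small cross term $\varepsilon\chi(x,v)$ so that dissipation in the position coordinate is produced despite the damping acting only on velocities --- but choose the radial profile completely differently from the Brownian case, because the $\alpha_i$-stable noises admit only moments strictly below $\alpha_i$. Concretely I would set
$$ V(x,v)=\psi\big(\tfrac12|v|^2+U(x)+\varepsilon\chi(x,v)\big),\qquad \psi(z)\simeq z^{\theta/2}\;\;\text{as}\;\;z\to\infty, $$
and decompose the generator as $L=L_0+L_Z$, with $L_0$ the Hamiltonian/damping part and $L_Z=\sum_i \LL^{(i)}_{\alpha_i}$ a sum of fractional Laplacians acting on the velocity blocks. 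The bound on $L_0V$ follows the singular-potential Brownian computation, exploiting the explicit shape of $U_I$ --- the $|u|^{-12}$ of Lennard--Jones or $|u|^{2-d}$ of Coulomb --- to handle near-collision configurations. The non-local part is controlled by a small-/large-jump split: concavity of $\psi$ together with the standard estimate
$$ L_ZV(x,v)\lesssim \big(1+|v|^2+U(x)\big)^{(\theta-\alpha_\ast)/2},\qquad \alpha_\ast:=\min_i\alpha_i, $$
yields, since $\theta<\alpha_\ast$, a lower-order perturbation of the negative drift from $L_0$, giving $LV\le -\lambda V+K\I_C$ on $\mathscr{K}$.

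For the minorization I would show that for some $t_0>0$ and any compact $C\subset\mathscr{K}$ the kernel $P_{t_0}((x,v),\cdot)$ admits a jointly continuous and strictly positive density on a non-empty open set. Smoothness follows from a H\"ormander-type bracket condition for the non-local operator $L$: since the noise enters only through $v$, one needs the iterated Lie brackets of the drift $(v,-\gamma v-\nn U(x))$ with $\pp_v$ to span $\R^{2Nd}$, which on $\mathscr{K}$ is immediate from the smoothness and non-degeneracy of $\nn U$ away from the singular manifold. Positivity then reduces to approximate controllability of $\dot X=V$, $\dot V=-\gamma V-\nn U(X)+u_t$: any two points of $C$ can be joined by a trajectory that avoids the singular set, using piecewise-constant controls steered by the repulsive geometry of $U$. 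The time-change device alluded to in the abstract enters to normalise the distinct scaling behaviours of $\LL^{(i)}_{\alpha_i}$ when the indices $\alpha_i$ differ, so that the above estimates close uniformly across all particles and yield a genuine positive lower bound for $p_{t_0}$ on a small open set intersecting every sublevel set of $V$.

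The main obstacle I anticipate is the Lyapunov estimate on $L_ZV$ in the near-collision region. There $V$ behaves like $U_I(x^{(i)}-x^{(j)})^{\theta/2}$, and a single large jump of $v^{(i)}$ can push the subsequent deterministic flow across a zone in which the repulsive forcing varies by unbounded amounts, so the naive large-jump bound fails. Balancing the growth of $V$ needed to absorb the large-jump integral (which prefers $\theta$ small) against the dissipation required to dominate the singular drift in $L_0$ (which prefers $\theta$ close to $\alpha_\ast$) is exactly where the sharp condition $\theta<\alpha_\ast$ comes from, and obtaining estimates that remain uniform up to the boundary $\pp\mathscr{D}(U)$ is, in my view, the real technical heart of the argument.
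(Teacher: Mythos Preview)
Your overall architecture --- Harris' theorem via a power-type Lyapunov function $V\simeq(|v|^2+U(x))^{\theta/2}$, together with minorization obtained from a H\"ormander-type regularity result and approximate controllability --- is exactly the paper's route. Two of your ingredients are miscast, however, and the ``main obstacle'' you single out is a phantom.

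First, the noise in \eqref{E1} enters only through the velocity, so a jump moves $v$ but leaves $x$, and hence $U(x)$, unchanged; no jump can carry the state across the singular manifold, and the \emph{subsequent} deterministic flow is irrelevant for the pointwise estimate $\mathscr L V\le -\lambda V+C$. The non-local increment $V(x,v+S_i(z))-V(x,v)$ therefore involves only the quadratic-in-$(v,z)$ part of the modified Hamiltonian, whose $U(x)$-dependent coefficients are bounded precisely because of the way the cross term is designed. The paper disposes of the large-jump integral with the elementary inequality $|a^{\theta/2}-b^{\theta/2}|\le|a-b|^{\theta/2}$ and Young's inequality, needing only $\sum_i\int_{\{|z|>1\}}|z|^\theta\,\nu^{(i)}(\d z)<\infty$; your sharper claim $L_ZV\lesssim(1+|v|^2+U(x))^{(\theta-\alpha_\ast)/2}$ is not required. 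Note also that the paper uses two \emph{different} cross terms: $\psi_\kk(x,v)=\kk\,\alpha(U(x))U(x)\langle v,\nn U(x)\rangle/|\nn U(x)|^2$ for the Lennard--Jones case, and a combination of $\beta\langle x,v\rangle$ with $-\frac{\alpha}{N}\sum\langle v^{(i)},{\bf n}(x^{(i)}-x^{(j)})\rangle$ for the Coulomb case; a single $\chi$ does not cover both.

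Second, you misread the role of the time change. It is not used to normalise the distinct indices $\alpha_i$. Each $\alpha_i$-stable process is written as a subordinated Brownian motion $W^{(i)}_{S^{(i)}_t}$; one then freezes the subordinator path $\ell\in\mathbb S$ and reduces irreducibility to the support theorem for the (time-changed) Brownian motion, which is where the approximate controllability of $\dot X=V,\ \dot V=-\gamma V-\nn U(X)+u_t$ enters. For the strong Feller property, because $U$ is singular one first truncates $U$ to a $C^\infty_b$ function $U_R$ on the sublevel set $\{\mathcal V<R\}$, applies the non-local H\"ormander theorem to the truncated SDE, and then lets $R\to\infty$ using the Lyapunov bound on the exit probability. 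Finally, the bracket computation $[b_R,\sigma_i]=(-e_i,\gamma e_i)^\top$ depends only on the kinematic structure of \eqref{E1} and not on any ``non-degeneracy of $\nn U$'' as you suggest.
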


Here and in what follows, for two functions $f,g,$ $f\simeq g$ means that there are constants $c_*,c^*$ such that $c_*f\le g\le c^*f$. In the following, we make some comments on Theorem \ref{thm1} and its proof.
\begin{remark}
\begin{itemize}

\item[{\rm(i)}] To be sure, the main result above is described in two specific setups.  Concerning    general frameworks, which are  applicable to much more wider singular interaction potentials, the associated main result  on exponential ergodicity of \eqref{E1} under the weighted total variation distance
will be
given  in Section \ref{section4}.

\item[{\rm(ii)}]
 Harris' theorem  is one of powerful tools to  investigate exponentially ergodic properties of Markov processes, where one of the essentials
is to construct an appropriate Lyapunov functions.
In \cite{HM,LM}
(even in \cite{MSH, Wu} for the case that the coefficients are regular), the exponential type Lyapunov functions (see (2.4) in \cite{HM} and (16) in \cite{LM} for more details) were constructed
in order to investigate
the geometric ergodicity of Langevin dynamics with singular potentials.
Such kind Lyapunov functions in turn require that the associated process has finite exponential type moments; see \cite[Theorem 2.3]{HM} and \cite[Theorem 2.5]{LM} for more details.
However, this requirement is rather restrictive  in case that  the driven noise (e.g., an $\alpha$-stable L\'{e}vy motion) admits heavy-tail properties. In particular, regarding the framework we are interested in, the Lyapunov functions constructed in
\cite{HM,LM} are not available any more. Yet motivated by inspirations on construction of Lyapunov functions in \cite{HM,LM}, two completely novel Lyapunov functions are built in the present work and applicable to singular potentials (e.g., the Lennard-Jones-like potential and the Coulomb potential).
In contrast to the main results in \cite{HM,LM}, the Lyapunov functions established in Theorem \ref{thm1} are only allowed to be of polynomial growth.
More importantly, the Lyapunov function given the present work also pave the way for further investigating weighted $L^2$-contractivity (e.g. \cite{CHS}), and the existence and the uniqueness of quasi-stationary distribution (e.g. \cite{GNW}) of L\'{e}vy driven Langevin dynamics with singular potentials.

\item[{\rm(iii)}]
With regarding to the construction of Lyapunov functions,
the driven L\'evy noises involved
can be much more general.  Whereas, as for  the irreducible property, the noise term is confined to be a range of symmetric stable processes (or more general subordinated Brownian motions), where the key ingredient is to make fully use of the topologically irreducible property of Brownian motions. To evade the application of Harris' theorem (in the vast majority of occasions, it is extremely difficult to  examine the strong Feller property and the irreducibility), the probabilistic coupling method might be an alternative to deal with ergodicity of \eqref{E1}
and, most importantly,
to encompass a wide range of pure jump L\'{e}vy noises
 (see \cite{BW} for the case that the coefficients are regular). Due to the involvement of singular potential, for the moment, it is still a very formidable task to construct an appropriate coupling and a suitable metric function to explore the ergodicity under the (quasi-)Wasserstein distance. Although it is arduous, it is worthy to make an attempt in the forthcoming  work.

\end{itemize}

\end{remark}

The rest of this paper is arranged as follows. In  Section \ref{sec2}, we aim to construct appropriate Lyapunov functions (see Propositions \ref{pro1} and \ref{pro2}) for the SDE \eqref{E1} under two different sufficient conditions. In particular, these two settings work very well for the Lennard-Jones-like interaction potentials (see Example \ref{ex1}) and the Coulomb potentials (see Example \ref{ex2}), respectively.  Section \ref{section3} is devoted to the strong Feller property (see Proposition \ref{strong}) by
adopting
a truncation argument and employing the
 H\"ormander theorem for nonlocal operators,
and to the irreducible property (see Proposition \ref{irre}) via the trick on the approximate controllability of the associated deterministic system,  where the driven noise $ ( {\bf Z}_t)_{t\ge0}:=((Z_t^{(1)},\cdots,Z_t^{(N)}))_{t\ge0}$ involved in \eqref{E1} is an independent
symmetric stable L\'{e}vy process. Meanwhile, with the Lyapunov function, the strong Feller property and the irreducibility at hand, the proof of Theorem \ref{thm1} is
 given in Section \ref{section4}
 by invoking Harris' theorem.
 Finally, a much more general result on exponential ergodicity of \eqref{E1} is presented
  before the conclusion of this work.

\section{Lyapunov Functions and Lyapunov Condition}\label{sec2}
Let $\mathscr L$ be the infinitesimal generator of $({\bf X}_t,{\bf V}_t)_{t\ge0}$ determined by \eqref{E1}; that is, for all $\psi\in C^2_b(\mathscr{K})$,
\begin{equation}\label{E9}
\begin{split}
(\mathscr L\psi)(x,v)&=\<\nn_x\psi(x,v),v\>-\<\nn_v\psi(x,v),\gamma  v+\nn U( x)\>\\
&\quad+\sum_{i=1}^N\int_{\R^d}\big(\psi(x,v+ S_i( z))-\psi(x,v)-\<\nn_v^{(i)}\psi(x,v),z\>\I_{\{|z|\le 1\}}\big)\,\nu^{(i)}(\d z).
\end{split}
\end{equation}
Here and in what follows,  $\mathscr K:=\mathscr D(U)\times \R^{Nd}$ with $\mathscr D(U):=\{x\in \R^{Nd}: U(x)<\infty\}$; for  $v=(v^{(1)},\cdots,v^{(N)})\in\R^{Nd}$ with $v^{(i)}\in\R^d$,
$$
  \nn_v \psi(x,v)=(\nn^{(1)}_v\psi(x,v),\cdots, \nn^{(N)}_v\psi(x,v))\in\R^{Nd},
$$
 where $\nn^{(i)}_v$ means the gradient operator with respect to the component $v^{(i)}\in\R^d$; for $x\in\mathscr D(U)$, $ \nn_x \psi(x,v)$ is defined analogously;
for $z\in \R^d$, $S_i( z)$ is  the $i$-th substitution  of the zero vector $({\bf 0}_1,\cdots,{\bf 0}_N)\in\R^{Nd}$, i.e.,
$$  S_i( z) =({\bf0}_1,\cdots, {\bf 0}_{i-1}, z, {\bf0}_{i+1},\cdots, {\bf 0}_N).$$

The main purpose of this section is to provide  respectively two sufficient conditions for the existence of   Lyapunov functions for the process $({\bf X}_t,{\bf V}_t)_{t\ge0}$ solving \eqref{E1}. In detail, we will deal with  two cases separately, which are, in particular,  adaptable to  the Lennard-Jones type potential and the Coulomb type potential, respectively.

\subsection{Case 1}

In this part, we assume that the potential term $U$ and the L\'{e}vy measures $(\nu^{(k)})_{1\le k\le N}$ satisfy
\begin{enumerate}\it
\item[$({\bf H}_U)$] $U: \mathscr D(U) \to \R
$ is a $C^\infty$-function so that $U(x)$ is bounded from below such that $U(x)\to +\8$ if and only if  $x\to\partial \mathscr D(U) $
  $($the boundary of the domain $\mathscr D(U)$, which is open and path-connected$\,)$
or $|x|\to+\8$,
and that there is a constant $C_U>0$ such that for all $x\in \mathscr{D}(U)$,
\begin{equation}\label{E3-} U(x) (1+\| \nabla^2 U(x)\|)\le C_U(1+|\nabla U(x)|^2),\end{equation}
where $\nn^2$ and
$\|\cdot\|$ stand  for the Hessian operator and the Hilbert-Schmidt  norm, respectively.

\item[$({\bf H}_\nu)$] there exists a constant  $\theta\in(0,2]$  such that $$\sum_{i=1}^N\int_{\{|z|>1\}}|z|^{\theta}\,\nu^{(i)}(\d z)<\8.$$
\end{enumerate}

According to \eqref{E3-} and the fact that $U(x)$ is bounded from below so that  $U(x)\to +\8$ if and only if  $x\to\partial \mathscr D(U)  $ or $|x|\to+\8$, we have
\begin{itemize}
\item  there exists a constant $r_0>0$ such that $|\nn U(x)|\ge 1$ for any $x\in\mathscr D(U)$ with $U(x)\ge r_0$;

\item  for any $r>0,$ there exists a constant $C_{U,r}^{*}>0$ such that
\begin{equation}\label{E3*}
\sup_{x\in\mathscr D(U): U(x)\le r }\|\nn^2 U(x)\|\le C_{U,r}^{*};
\end{equation}

\item there exist constants $C^*_U,R_U^*>0$ such that for all $x\in\mathscr D(U)$ with $U(x)\ge R_U^*$,
\begin{equation}\label{E3}
 \frac{U(x)}{|\nn U(x)|^2} \big(1+\|\nn^2U(x)\|\big)\le C_U^*.
\end{equation}
\end{itemize}

Below, let $\alpha\in C^\8([0,\8);[0,1])$ satisfying
$$
\alpha(u)=
\begin{cases}
1,\quad u\ge 2r_0 ,\\
0,\quad u\le r_0,
\end{cases}
$$ and $|\alpha'(u)|\le {2}/{r_0}$ for all $u\in [r_0,2r_0]$.
For $\kk>0,$
define
$$\psi_\kk(x,v) =\frac{\kk}{|\nn U(x)|^2}\alpha(U(x)) U(x)\<v,\nn U(x)\>, \quad (x,v)\in\mathscr K.$$

According to the definition of $\alpha(\cdot)$ and the fact that $|\nn U(x)|\ge 1$ for all $x\in \mathscr{D}(U)$ with $U(x)\ge r_0$,
it follows  from \eqref{E3} that
\begin{equation}\label{E2}
\begin{split}
  |\psi_\kk(x,v)|&\le \frac{\kk^2 \alpha(U(x))^2U(x)^2}{|\nn U(x)|^2}+\frac{|v|^2}{4}\\ &\le \frac{\kk^2  U(x)^2}{|\nn U(x)|^2}\I_{\{U(x)\ge  r_0 \vee R_U^*\}}+\frac{\kk^2 U(x)^2}{|\nn U(x)|^2}\I_{\{r_0<U(x)<r_0 \vee R_U^*\}}+\frac{|v|^2}{4 }\\ &\le   \kk^2\Big( C^*_U \I_{\{U(x)\ge  r_0 \vee R_U^*\}} + U(x) \I_{\{r_0\le U(x)<  r_0  \vee R_U^* \}}\Big) U(x) +\frac{|v|^2}{4 }\\
  &\le \kk^2 \theta_0 U(x)\I_{\{U(x)\ge r_0\}}+\frac{|v|^2}{4 },\quad (x,v)\in\mathscr K,
\end{split}
\end{equation}
where $\theta_0:=r_0\vee C^*_U  \vee R_U^*  $.

In the following, we set
\begin{equation}\label{EE0}
 V_{\kk}( x, v):=C_*+ \frac{| v|^2}{2}+U( x)+\psi_\kk(x,v),\quad (x,v)\in\mathscr K,\end{equation} where
$$C_*:=1+ \sup_{x\in \mathscr{D}(U):U(x)< r_0} |U(x)|.$$
In particular, it holds  that for all $(x,v)\in\mathscr K$,
\begin{equation}\label{E4}
\begin{split}
C_*+\frac{3|v|^2}{4}+\big(1+\kk^2\theta_0\I_{\{U(x)\ge r_0\}}\big)U(x)\ge &V_{\kk}( x, v)\\
\ge &C_*+\frac{|v|^2}{4}+\big(1-\kk^2\theta_0\I_{\{U(x)\ge r_0\}}\big)U(x).
\end{split}
\end{equation}
Consequently,
as long as  $\kk\in\big(0,\frac{1}{\ss{\theta_0}}\big]$,  $V_{\kk}( x, v)\ge1$ for all $(x,v)\in\mathscr K$.

Furthermore, let
\begin{equation}\label{EE2*}
 \kk^* =\frac{1}{\ss{\theta_0}}\wedge \frac{1}{2\theta_0\gamma}\wedge\frac{\gamma}{4(5 +3  (C_U^*  \vee   (
 \beta_0  C^*_{U,\beta_0})))},
 \end{equation}
where $\beta_0:=(2r_0)\vee R_U^*$.

With the preliminary materials above,
we have the following extremely significant statement.

\begin{proposition}\label{pro1}
Assume that $({\bf H}_U)$ and $({\bf H}_\nu)$ hold. Then,
for any $\kk\in(0,\kk^*)$ and $\mathcal V_{\kk,\theta}(x,v):=V_\kk(x,v)^{{\theta}/{2}}$ with $\theta\in (0,2]$ given in $({\bf H}_\nu)$,  and $ \kk^*$ and
$V_{\kk}$ being defined by \eqref{EE2*} and \eqref{EE0} respectively, there exist constants $\lambda_{\mathcal V},C_{\mathcal V}>0$ such that for $(x,v)\in\mathscr K,$
\begin{equation}\label{E6}
(\mathscr L\mathcal V_{\kk,\theta})(x,v)\le-\lambda_{\mathcal V} \mathcal V_{\kk,\theta}(x,v)+C_{\mathcal V}.
\end{equation}
\end{proposition}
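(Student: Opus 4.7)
The plan is to split $\mathscr L = \mathscr L^D + \mathscr L^J$ into its first-order drift and its L\'evy-jump parts, to establish the Lyapunov inequality first for $V_\kk$ itself, and then to transfer it to $\mathcal V_{\kk,\theta} = V_\kk^{\theta/2}$ via the chain rule for $\mathscr L^D$ and a concavity-plus-subadditivity argument for $\mathscr L^J$, exploiting that $\theta/2\in(0,1]$.

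First I would compute $\mathscr L^D V_\kk$. The Hamiltonian pair $\frac12|v|^2+U(x)$ contributes only $-\gamma|v|^2$, the cross-term $\<v,\nn U\>$ cancelling. The corrector $\psi_\kk$ is engineered so that $\<\nn U,\nn_v\psi_\kk\>=\kk\,\alpha(U)U$, producing the missing $x$-dissipation $-\kk\alpha(U)U$. The leftover cross terms $\<v,\nn_x\psi_\kk\>$ and $\gamma\<v,\nn_v\psi_\kk\>$ are polynomials in $v$ whose coefficients are of the form $U/|\nn U|^2$ and $U\|\nn^2U\|/|\nn U|^2$; these are uniformly bounded on $\{U\ge 2r_0\}$ by \eqref{E3}, on the annulus $\{r_0\le U\le 2r_0\}$ by \eqref{E3*}, and killed by $\alpha(U)$ on $\{U\le r_0\}$. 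Choosing $\kk\in(0,\kk^*)$ with $\kk^*$ from \eqref{EE2*} absorbs them into $-\frac{\gamma}{2}|v|^2-\frac{\kk}{2}\alpha(U)U$; combined with \eqref{E4} and boundedness of $U$ on $\{U\le 2r_0\}$, this gives $\mathscr L^D V_\kk \le -\lambda_0 V_\kk + C_0$ for suitable constants $\lambda_0,C_0>0$.

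Next, because $\psi_\kk$ is linear in $v$, $V_\kk$ is quadratic in $v$, so the exact finite increment is $V_\kk(x,v+S_i(z))-V_\kk(x,v)=\<v^{(i)}+\kk g(x)\nn^{(i)}U(x),z\>+\frac12|z|^2$ with $g(x):=\alpha(U(x))U(x)/|\nn U(x)|^2$ bounded by $\theta_0$. Passing to $\mathcal V_{\kk,\theta}$, the chain rule yields $\mathscr L^D\mathcal V_{\kk,\theta}=\frac{\theta}{2}V_\kk^{\theta/2-1}\mathscr L^DV_\kk\le -\frac{\theta\lambda_0}{2}V_\kk^{\theta/2}+C_1V_\kk^{\theta/2-1}$. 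For $\mathscr L^J\mathcal V_{\kk,\theta}$ I would treat small jumps $|z|\le 1$ by a second-order Taylor expansion, using the pivotal algebraic bound $|v+\kk g(x)\nn U(x)|^2\le C V_\kk(x,v)$ (which follows from the same $U/|\nn U|^2$ estimate used in \eqref{E2}) to control the Hessian by $\|\nn_v^2 V_\kk^{\theta/2}\|\le C V_\kk^{\theta/2-1}$; and large jumps $|z|>1$ by the subadditivity $|a+b|^{\theta/2}\le|a|^{\theta/2}+|b|^{\theta/2}$ valid for $\theta/2\in(0,1]$, reducing the integrand to a $\theta/2$-power of the increment and hence integrable against $\nu^{(i)}$ thanks to the $\theta$-moment assumption in $({\bf H}_\nu)$. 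All contributions of $\mathscr L^J\mathcal V_{\kk,\theta}$ are thus strictly of lower order than $\mathcal V_{\kk,\theta}=V_\kk^{\theta/2}$, so Young's inequality absorbs them into the drift dissipation to give \eqref{E6}.

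The hardest point is controlling $\mathscr L^J\mathcal V_{\kk,\theta}$ uniformly on $\mathscr K$, since $V_\kk$ depends on $x$ via $U$ and $\nn U$ which blow up near $\partial\mathscr D(U)$. The crucial saving is that each jump only modifies $v$, freezing the singular $x$-dependent coefficients; the structural bound \eqref{E3} keeps $g(x)$ uniformly bounded on all of $\mathscr D(U)$, and combined with $|v+\kk g(x)\nn U(x)|^2\le C V_\kk(x,v)$ this decouples the singular behaviour of the potential from the action of $\mathscr L^J$, reducing the nonlocal analysis to a polynomial-in-$v$ estimate driven entirely by the $\theta$-moment tail of $\nu^{(i)}$.
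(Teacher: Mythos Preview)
Your proposal is correct and follows essentially the same route as the paper: split into drift and jump parts, show that the drift acting on $V_\kk$ yields $-c|v|^2-\tfrac{\kk}{2}\alpha(U)U+C$ (hence $-\lambda_0 V_\kk+C_0$ via \eqref{E4}), pass to $V_\kk^{\theta/2}$ by the chain rule, bound small jumps by Taylor/Hessian, and bound large jumps by the subadditivity of $t\mapsto t^{\theta/2}$. The only cosmetic differences are that the paper drops the rank-one part of $\nn_v^2 V_\kk^{\theta/2}$ by its sign (since $\theta/2-1\le 0$) rather than bounding it via $|v+\kk g(x)\nn U|^2\le C V_\kk$, and handles the large-jump term with an $\varepsilon$-splitting $2ab\le \varepsilon a^2+b^2/\varepsilon$ rather than your direct $V_\kk^{\theta/4}$ estimate; both variants give the same conclusion.
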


Before the proof of the proposition above, let us make some comments on the assumptions and the construction of the Lyapunov function $\mathcal V_{\kk,\theta}(x,v)$ involved in.

\begin{remark}
\begin{itemize}
\item[{\rm (i)}] Since $U(x)$ is bounded from below so that  $U(x)\to +\8$ if and only if  $x\to\partial \mathscr D(U)  $ or $|x|\to+\8$,
 it follows from \eqref{E3-} and \eqref{E3} that when $U(x)\to \infty$,
 $$|\nabla U(x)|\to \infty,\quad \frac{ \|\nabla^2 U(x)\|}{|\nabla U(x)|^2}\to 0.$$ Hence, the assumption $({\bf H}_U)$ is a little bit stronger than the properties of the admissible potential adopted in \cite{HM}; see \cite[Definition 2.1]{HM} for more details.
 Although the assumption $({\bf H}_U)$ is slightly restrictive compared with the counterpart in \cite{HM}, it is very competent to handle the singular potential we are interested in.

\item[{\rm (ii)}]  The construction of the Lyapunov function $\mathcal V_{\kk,\theta}(x,v):=V_\kk(x,v)^{{\theta}/{2}}$ has the following intuition from two aspects of viewpoints. Firstly, the function $V_\kk(x,v)$ given by \eqref{EE0} somehow is inspired by the exponent term of the function $W(q,p)$ given in \cite[(5.1)]{HM}. However, the lower order perturbation term $\psi_\kk(x,v)$ here is different entirely  from that in \cite{HM}. Indeed, this difference is crucial to our arguments for the non-local operator $\mathscr L$.  Secondly, since we assume that the L\'evy measure has only finite $\theta$-moment condition as stated in Assumption $({\bf H}_\nu)$, the exponential-type Lyapunov function $W(q,p)$ used in \cite{HM} does not work in our setting. Instead, we will take the power-order (exactly with the $\theta/2$-order under Assumption $({\bf H}_\nu)$) of the function $V_\kk(x,v)$. This partly reflects the heavy-tailed property of the L\'evy noises. In particular, according to \eqref{E4},
    $$\mathcal V_{\kk,\theta}(x,v)\simeq (|v|^2+U(x))^{\theta/2}\quad \hbox{ as } |v|^2+U(x) \to \infty.$$
 \end{itemize}

 \end{remark}

 \begin{proof}[Proof of Proposition $\ref{pro1}$]
Below, we stipulate  $(x,v)\in\mathscr K$ and fix $\kk\in(0,\kk^*),$ which obviously implies $V_\kk\ge1.$ Since
$$
 \nn_x \mathcal V_{\kk,\theta}(x,v)=\frac{\theta}{2}V_\kk(x,v)^{{\theta}/{2}-1} \nn_x V_{\kk}( x, v),\quad  \nn_v \mathcal V_{\kk,\theta}(x,v)=\frac{\theta}{2}V_\kk(x,v)^{{\theta}/{2}-1} \nn_v V_{\kk}( x, v),
$$
we can write
\begin{align*}
(\mathscr L\mathcal V_{\kk,\theta})(x,v)&=\frac{\theta}{2}V_\kk(x,v)^{{\theta}/{2}-1}\big(\<\nn_xV_{\kk}( x,v),v\>-\<\nn_vV_{\kk}( x,v),\gamma  v+\nn U( x)\>\big)\\
&\quad+\sum_{i=1}^N\int_{\{|z |\le1\}}\big(\mathcal V_{\kk,\theta}(x,v+S_i(z ))-\mathcal V_{\kk,\theta}(x,v)-\<\nn_v^{(i)}\mathcal V_{\kk,\theta}(x,v),z \> \big)\,\nu^{(i)}(\d z )\\
&\quad+\sum_{i=1}^N \int_{\{|z |>1\}}\big(\mathcal V_{\kk,\theta}(x,v+S_i(z ))-\mathcal V_{\kk,\theta}(x,v) \big)\,\nu^{(i)}(\d z )\\
&=:\frac{\theta}{2}V_\kk(x,v)^{{\theta}/{2}-1}I_1(x,v)+I_2(x,v)+I_3(x,v).
\end{align*}

In the following, we will quantify  the terms $I_1,I_2$ and $I_3$, respectively.
First, since
 \begin{align*}
 \nn_x V_{\kk}( x,v)&=\nn U(x)+\frac{\kk}{|\nn U(x)|^2}\alpha'(U(x)) U(x)\<v,\nn U(x)\>\nn U(x)\\
 &\quad+\frac{ \kk \alpha(U(x))}{|\nn U(x)|^2}\bigg[   \<v,\nn U(x)\>\nn U(x) \\
 &\qquad\qquad\qquad\quad+ U(x)\bigg(\nn^2 U(x)-\frac{2 ((\nn^2U(x) \nn U(x))\otimes\nn U(x) ) }{|\nn U(x)|^2}\bigg)v\bigg],
 \end{align*}
 and
$$
 \nn_v V_{\kk}(x,v)=v+\frac{\kk  }{|\nn U(x)|^2}\alpha(U(x))U(x)\nn U(x),
$$
we deduce
that
\begin{align*}
 I_1(x,v)&=-\gamma|v|^2-\gamma\psi_\kk(x,v)\\
 &\quad+\bigg(-\kk \alpha(U(x))U(x)+\frac{ \kk }{|\nn U(x)|^2}\big(\alpha(U(x))+\alpha'(U(x))U(x)\big) \<v,\nn U(x)\>^2\bigg)\\
 &\quad+\frac{ \kk \alpha(U(x))U(x)}{|\nn U(x)|^2}  \bigg( \<v,\nn^2 U(x)v\> -\frac{ 2  }{|\nn U(x)|^2}   \<v,((\nn^2U(x) \nn U(x))\otimes\nn U(x)) v\>\bigg)\\
 &=:-\gamma|v|^2-\gamma\psi_\kk(x,v)+ I_{11}(x,v)+I_{12}(x,v).
 \end{align*}

By means of \eqref{E2}, it follows readily that
$$
-\gamma\psi_\kk(x,v)\le  \gamma\theta_0 \kk^2 U(x)\I_{\{U(x)\ge r_0\}}+\frac{\gamma}{4 }|v|^2.
$$
Next, because of $\alpha(u)=0$ for $u\le r_0$,  $\alpha(u)=1$ for $u\ge 2r_0$ and $|\alpha' (u)|\le  2/r_0$ for $u\in [r_0,2r_0]$, we find that
$$
	I_{11}(x,v)\le-\kk  \alpha(U(x))U(x)\I_{\{r_0\le U(x)<2r_0\}} -\kk  U(x)\I_{\{U(x)\ge 2r_0\}} + 5
	\kk    |v|^2.
$$

 Furthermore, by virtue of  $|\nn U(x)|\ge 1$ for $U(x)\ge r_0$,
 $\alpha(u)=0$ for $u\le r_0$,
 \eqref{E3*} as well as \eqref{E3}, we deduce that
\begin{align*}
		I_{12}(x,v)&\le \frac{ 3\kk }{|\nn U(x)|^2}      U(x)\|\nn^2 U(x)\|\cdot|v|^2  \I_{\{U(x)\ge \beta_0 \}} +\frac{
			3\kk \beta_0}{|\nn U(x)|^2} \|\nn^2 U(x)\|\cdot|v|^2\I_{\{r_0\le U(x)\le \beta_0 \}}\\
		&\le 3\kk \big(C_U^*  \vee   (
		\beta_0  C^*_{U,\beta_0})\big)|v|^2,
	\end{align*}
where $\beta_0:=(2r_0)\vee R_U^*$. Therefore, we arrive at
\begin{align*}
	I_1(x,v)\le& -\Big(\frac{3}{4}\gamma-5
	\kk-3\kk \big(C_U^*  \vee   (
	\beta_0  C^*_{U,\beta_0})\big)\Big)|v|^2\\&-\kk(1-\gamma\theta_0 \kk)  U(x)\I_{\{U(x)\ge 2r_0\}}+ \gamma\theta_0 \kk^2  U(x)\I_{\{r_0\le U(x)< 2r_0\}}.
	\end{align*}

Thanks to $\kappa\in(0,\kappa^*)$, we obtain from the definition of $\kappa^*$ given in \eqref{EE2*} that
$$
  5\kk+3\kk (C_U^*  \vee   (
 \beta_0 C^*_{U,\beta_0} ))\le \frac{ \gamma}{4},\qquad \frac{1}{2}-\gamma\theta_0 \kk\ge0
$$
so that
$$
	I_1(x,v)\le -\frac{\gamma}{2}|v|^2 -\frac{1}{2}\kk   U(x)\I_{\{U(x)\ge 2r_0\}}+ \gamma\theta_0 \kk^2  U(x)\I_{\{r_0\le U(x)< 2r_0\}}.
$$
As a result, combining this with \eqref{E4}, $V_\kk\ge1$ and $\theta\in(0,2]$ yields for some constants $c_1,C_1>0$
\begin{equation}\label{e:proof1}
\frac{\theta}{2}V_\kk(x,v)^{{\theta}/{2}-1}I_1(x,v)\le - c_1V_\kk(x,v)^{{\theta}/{2}}+C_1.
\end{equation}

Note that for each $i=1,\cdots,N,$
\begin{align*}
(\nn_v^{(i)})^2\mathcal V_{\kk,\theta}(x,v)&=\frac{\theta}{2}V_{\kk }(x,v)^{{\theta}/{2}-1}\Bigg[\I_{d\times d} + \Big(\frac{\theta}{2}-1\Big)V_{\kk }(x,v)^{ -1}\\
&\quad   \times\bigg(v^{(i)}+\frac{\kk\alpha(U(x))U(x)}{|\nn U(x)|^2}\nn^{(i)} U(x)\bigg)\otimes\bigg(v^{(i)}+\frac{\kk\alpha(U(x)) U(x)}{|\nn U(x)|^2}\nn^{(i)} U(x)\bigg)\Bigg],
\end{align*}
where the second matrix in the big bracket is non-positive definite. Consequently,
 the mean value theorem, together with the  prerequisite $\theta\in(0,2)$ and the fact that $V_{\kk}\ge1$, leads to
\begin{equation}\label{e:proof2}
I_2(x,v)\le\frac{\theta}{4}\sum_{i=1}^N\int_{\{|z|\le1\}}|z|^2\,\nu^{(i)}(\d z)<\8
\end{equation}
since $\nu^{(i)}(\d z)$ is a L\'{e}vy measure on $\R^d.$

 Next, making use  of the inequalities:  $|a^{{\theta}/{2}}-b^{{\theta}/{2}}|\le |a-b|^{{\theta}/{2}}$ for $a, b\ge0,$ and $2ab\le \vv a^2+b^2/\vv$
for all $a,b\ge0,\vv>0$ yields for all $\vv_1,\vv_2>0,$
 \begin{align*}
 I_3(x,v)
 &\le \sum_{i=1}^N  \int_{\{|z|>1\}}\Big|\<v^{(i)},z \>+ \frac{| z |^2}{2} +\frac{\kk\alpha(U(x)) U(x)}{|\nn U(x)|^2}\<z,\nn^{(i)} U(x)\> \Big|^{{\theta}/{2}}\,\nu^{(i)}(\d z)\\
 &\le \sum_{i=1}^N \int_{\{|z|>1\}}\bigg(|v^{(i)}|\cdot|z |+ \frac{| z |^2}{2} +\frac{\kk\alpha(U(x)) U(x)}{|\nn U(x)|}\I_{\{U(x)\ge r_0\}}|z|\bigg)^{{\theta}/{2}}\,\nu^{(i)}(\d z)\\
 &\le  \sum_{i=1}^N\int_{\{|z|>1\}}\bigg(\Big(\vv_1|v^{(i)}|^2+\frac{1}{4\vv_1}\Big) |z|+ \frac{| z|^2}{2} \\
 &\qquad\qquad\qquad\quad+\Big(\Big(\vv_2U(x)+\frac{\kk^2 U(x)}{4\vv_2|\nn U(x)|^2}\Big)\I_{\{U(x)\ge \beta_0\}}+\kk\beta_0\Big)|z|\bigg)^{{\theta}/{2}}\,\nu^{(i)}(\d z)\\
 &\le (\vv_1|v|^2+\vv_2U(x)\I_{\{U(x)\ge \beta_0\}})^{{\theta}/{2}}\sum_{i=1}^N\int_{\{|z|>1\}}|z|^{ {\theta}/{2} }\,\nu^{(i)}(\d z)\\
 &\quad+\left(\frac{1}{2} +\frac{1}{ 4\vv_1}   +\frac{\kk^2 C^*_U}{ 4\vv_2 }+\kk\beta_0\right)^{{\theta}/{2}}\sum_{i=1}^N\int_{\{|z|>1\}}|z|^\theta\,\nu^{(i)}(\d z),
\end{align*}
where in the second inequality we use     $|\nn^{(i)}U(x)|\le |\nn U(x)|$; in the third inequality we employ the facts
$|\nn U(x)|\ge 1$ for $x\in \R^{Nd}$ with $U(x)\ge r_0$ and $\alpha(u)=1$ for $u\ge 2r_0;$
and the last inequality holds true from \eqref{E3} and $|v^{(i)}|\le |v|$. Subsequently,
using \eqref{E4} and Assumption $({\bf H}_\nu)$, and choosing $\vv_1$ and $\vv_2$ small enough, we can get that for some constant $C_2>0,$
\begin{equation}\label{e:proof3}
I_3(x,v)\le \frac{c_1}{2}V_\kk(x,v)^{ {\theta}/{2}}+C_2,
\end{equation} where $c_1$ is given in \eqref{e:proof1}.

Finally, the assertion \eqref{E6} follows by combining \eqref{e:proof1} with \eqref{e:proof2} and \eqref{e:proof3}.
\end{proof}

In the sequel, we set an example to show the  Assumption  $({\bf H}_U)$ is verifiable.   In particular,
the following example demonstrates that the singular potentials involved in \eqref{E1} include
the Lennard-Jones type potentials.

\begin{example}\label{ex1}
 Consider the Lennard-Jones type potential (see e.g. \cite[Example 4.4]{HM})
$$
U(x)=\sum_{i=1}^NU_c(x^{(i)})+\sum_{1\le i<j\le N}U_I(x^{(i)}-x^{(j)}),
$$
where
$$
U_c(u):=A
(1+|u|^2)^{\alpha/2}+\phi_c(u),\quad u\in\R^d; \quad U_I(u):=\frac{B}{|u|^\beta}+\phi_I(u),\quad u\in\mathscr D(U_I)
$$
with $A,B,\beta>0,\alpha\ge2$,
$\phi_c\in C^\8(\R^d)$ and $\phi_I\in C^\infty(\mathscr D(\phi_I))$ so that
\begin{align}
\lim_{|u|\to \8}|u|^{-\alpha}|\phi_c(u)|&=\lim_{|u|\to \8}|u|^{1-\alpha}|\nn\phi_c(u)|=\lim_{|u|\to \8}|u|^{2-\alpha}\|\nn^2\phi_c(u)\|=0,\label{EE3*}\\
\lim_{u\in\mathscr D(\phi_I),|u|\to0}|u|^\beta|\phi_I(u)|&= \lim_{u\in\mathscr D(\phi_I),|u|\to0}|u|^{1+\beta}|\nn\phi_I(u)|=\lim_{u\in\mathscr D(\phi_I),|u|\to0}|u|^{2+\beta}\|\nn^2\phi_I(u)\|=0,
\label{EE4*}
\end{align}
and that  for some $r>0$,
\begin{equation}\label{EE5*}
\phi_I,\,\nn\phi_I,\, \nn^2\phi_I\,\mbox{ are bounded on the set }  \{u\in\mathscr D(\phi_I): |u|\ge r\}.
\end{equation}
In particular, when $\phi_c(u)=0, \beta=12$ and $\phi_I(u)=-\frac{C}{|u|^6}$
for some positive constant $C$, the potential $U$ above corresponds to the classical  Lennard-Jones potential.

It is clear that $U: \mathscr D(U) \to \R
$ is a $C^\infty$-function. On the other hand, note that for any non-zero vector $u\in\R^d,$
\begin{equation}\label{e:add1}
\begin{split}
U_c(u)
&\ge \frac{1}{2} A|u|^\alpha +\frac{1}{2}A|u|^\alpha\Big(1-\frac{2}{A}|u|^{-\alpha}|\phi_c(u)|\Big), \\
U_I(u)&\ge  \frac{B}{2|u|^\beta}+\frac{B}{2|u|^\beta}\Big(1-
\frac{2}{B}|u|^\beta|\phi_I(u)|\Big).
\end{split}
\end{equation}
Then, by making use of
\begin{equation}\label{EE6*}
\lim_{|u|\to \8}|u|^{-\alpha}|\phi_c(u)|=\lim_{u\in\mathscr D(\phi_I),|u|\to0}|u|^\beta|\phi_I(u)|= 0,
\end{equation}
due to \eqref{EE3*} and \eqref{EE4*},
in addition to the   boundedness of
$\phi_c$ and $\phi_I$ in a closed ball and out of a closed ball (see \eqref{EE5*}), respectively,  there exists a constant $C_U^\star>0$ such that
$$\bar U(x):=U(x)+C_U^\star\ge1.$$ In particular, the function $U(x)$ is bounded from below, and $U(x)\to +\8$ if and only if  $x\to\partial \mathscr D(U) $
or $|x|\to+\8$, thanks to \eqref{e:add1}.

According to the conclusions above, in order to prove \eqref{E3-} it amounts  to verify \eqref{E3} holds for the function $\bar U$.
Recall from \cite[Lemma A.1]{HM} that there exist constants $C_1,C_2,C_3>0$ such that
$$
|\nn \bar U(x)|\ge C_1|x|^{\alpha-1}+C_2\sum_{1\le i<j\le N} |x^{(i)}-x^{(j)}|^{-\beta-1}-C_3.
$$
Recall also that $\bar U(x)\to \8$ if and only if $|x|\uparrow\8$ or $|x^{(i)}-x^{(j)}|\downarrow 0$ for some $i\neq j$. Therefore, there exists a constant $R_{\bar U}^*>0$ such that for all $x\in\mathscr D(\bar U)$ with $\bar U(x)\ge R_{\bar U}^*,$
$$
\frac{1}{2}C_1|x|^{\alpha-1}+\frac{1}{2}C_2\sum_{1\le i<j\le N} |x^{(i)}-x^{(j)}|^{-\beta-1}\ge C_3
$$
so that for all $x\in\mathscr D(\bar U)$ with $\bar U(x)\ge R_{\bar U}^*,$
\begin{equation}\label{EE7*}
|\nn \bar U(x)|\ge F(x):=\frac{1}{2}C_1|x|^{\alpha-1}+\frac{1}{2}C_2\sum_{1\le i<j\le N} |x^{(i)}-x^{(j)}|^{-\beta-1}.
\end{equation}
By taking \eqref{EE6*} into consideration, along with the boundedness of $\phi_c$ and $\phi_I$ in a closed ball and outside   a closed ball, respectively, there is a constant  $C_4>0 $ such that
\begin{equation}\label{EE8*}
\bar U(x)\le C_4\bigg(1+|x|^\alpha+\sum_{1\le i<j\le N}|x^{(i)}-x^{(j)}|^{-\beta }\bigg).
\end{equation}
This, besides \eqref{EE7*}, implies that
for all $x\in\mathscr D(\bar U)$ satisfying $\bar U(x)\ge R_{\bar U}^*$,
\begin{align*}
\frac{\bar U(x)}{|\nn \bar U(x)|^2}&\le \frac{ 4C_4\big(1+|x|^\alpha+\sum_{1\le i<j\le N}|x^{(i)}-x^{(j)}|^{-\beta }\big)}{C_1^2|x|^{2(\alpha-1)}+C_2^2(\sum_{1\le i<j\le N}|x^{(i)}-x^{(j)}|^{-\beta-1 })^2}\\
&\le \frac{ 4C_4\big(1+(N-1)N/2+|x|^\alpha+\sum_{1\le i<j\le N}|x^{(i)}-x^{(j)}|^{-\beta }\I_{\{|x^{(i)}-x^{(j)}|\le 1\}}\big)}{C_1^2|x|^{2(\alpha-1)}+C_2^2(\sum_{1\le i<j\le N}|x^{(i)}-x^{(j)}|^{-\beta-1 }\I_{\{|x^{(i)}-x^{(j)}|\le 1\}})^2}.
\end{align*}
By the fundamental inequality: for any $a,b,c,d>0,$
$$
\frac{a+b}{c+d}\le \frac{a}{c}\vee\frac{b}{d},
$$
and the fact that $\alpha-2(\alpha-1)=-\alpha+2\le 0$, yields that there exists a constant $C_{\bar U}^*>0$ such that
\begin{equation}\label{E10*}
\sup_{x\in\mathscr D(\bar U): \bar U(x)\ge R_{\bar U}^*}\frac{\bar U(x)}{|\nn \bar U(x)|^2}\le C_{\bar U}^*.
\end{equation}

Next, applying from \eqref{EE3*} and \eqref{EE4*}
$$
\lim_{|u|\to \8}|u|^{2-\alpha}\|\nn^2\phi_c(u)\|=\lim_{u\in\mathscr D(\phi_I),|u|\to0}|u|^{2+\beta}{\|\nn^2\phi_I(u)\|}=0
$$
and taking advantage of the definition of $U(x)$  enables us to obtain that there is a   constant $C_5>0 $ such that for all $x\in\mathscr D(\bar U)$,
\begin{equation}\label{E9*}
\| \nn^2\bar U(x)\|\le C_5 \bigg(1+|x|^{\alpha-2}+\sum_{1\le i<j\le N}|x^{(i)}-x^{(j)}|^{-\beta-2}\bigg).
\end{equation}
Subsequently, combining \eqref{EE7*} with \eqref{EE8*} and \eqref{E9*}, in addition to $\alpha+\alpha-2-2(\alpha-1)=0$ and $-\beta-(\beta+2)+2(\beta+1)=0,$
we infer that there exists a constant $C_{\bar U}^{**}>0$ such that for all $x\in\mathscr D(\bar U)$ satisfying $\bar U(x)\ge R_{\bar U}^*$,
$$
\sup_{x\in\mathscr D(\bar U): \bar U(x)\ge R_{\bar U}^*}\frac{\bar U(x)\|\nn^2 \bar U(x)\|}{|\nn \bar U(x)|^2}\le C_{\bar U}^{**}.
$$
Hence, we conclude that \eqref{E3} holds true for $\bar U$ by taking \eqref{E10*} into account.

Therefore, based on all the conclusions above, we infer that $U(x)$ satisfies the Assumption  $({\bf H}_U)$.
\end{example}

Example \ref{ex1} indicates that the Lyapunov condition \eqref{E6} is available  for a wide range of singular potentials, including the Lennard-Jones potential, the Riesz potential
(i.e., $U_I(u)=|u|^{1-d}$ for all $d\ge 2$), and the (Newtonian)  Coulomb potential (i.e., $U_I(u)=|u|^{2-d}$ for all $d\ge 3$) as typical candidates.

\subsection{Case 2}
Note that the Coulomb type potential for the case  $d=2$ (for example, $U_I(u)=-\log |u|$ for non-zero vectors $u\in\R^2$ does not satisfy \eqref{EE5*})
 is excluded by Proposition \ref{pro1}. In order to handle the Coulomb type potential, as another  classical  representative of singular potentials, we shall put forward
 another collection of sufficient conditions so that the Lyapunov type condition \eqref{E6} remains true.

To this end, suppose that {\it the potential $U$ can be written as
\begin{equation}\label{EE1}
U(x)=\sum_{i=1}^NV(x^{(i)})+\frac{1}{N}\sum_{ i,j=1, j\neq i}^N K(x^{(i)}-x^{(j)}),
\end{equation}
where $V:\R^d\to \R$ is a $C^\infty$-function and $K:\mathscr D(K)\to\R$ is a radial $C^\infty$-function so that $\{x\in\R^d: |x|\ge r \}\subset  \mathscr D(K):=\{x\in \R^d: |K(x)|<\infty\}$ and
$\sup_{|x|\ge r}|\nabla K(x)|<\infty$ for any $r>0$.
 Moreover, the following two conditions are satisfied for $V$ and $K$, respectively.}
\begin{itemize} \it

 \item[$({\bf H}_V)$] there exist constants $C_{V}^*,C_V^{**}>0$ and $M_V,M_V^*\ge0$ such that for all $x\in\R^d,$
\begin{equation}\label{*E1}
C_{V}^*|x|^2-M_{V}\le
 V(x)\le \frac{1}{C_{V}^{*}}\big( M_{V}+\<\nn V(x),x\> \big),
\end{equation}
and
\begin{equation}\label{EE2}
|\nn V(x)|\le C_{V}^{**}V(x)+M_{V}^*.
\end{equation}

\item[$({\bf H}_K)$] there exist constants $ R_{K}, C_{K}^*>0$ such that for all $x\in\mathscr D({K})$ with $|x|\le R_{K} $,
\begin{equation}\label{*E5}
K(x)\ge0,\quad  \frac{1}{|x|}\<x,\nn K(x)\>\le -C_{K}^* K(x) .
\end{equation}
Moreover,  there exists a constant $C_{K}^{**}>0$ such that for all $x\in \mathscr D({K})$,
\begin{equation}\label{EE3}
\sum_{i,j,k=1,j,k\neq i}^N \<{\bf n}( x^{(i)}-x^{(j)}),\nn  K(x^{(i)}-x^{(k)}) \>\le C_{K}^{**}\sum_{i,j=1,j\neq i}^N \<{\bf n}( x^{(i)}-x^{(j)}),   \nn  K(x^{(i)}-x^{(j)}) \>,
\end{equation} where ${\bf n}(u):=u/|u|$ for a non-zero vector $u\in\R^d.$
\end{itemize}

\ \

Note that, in the present setting $\mathscr K:=\mathscr D(U)\times\R^{Nd}=\mathscr D(K)\times\R^{Nd}$. For $\alpha,\beta, C^\star>0,$ define
\begin{equation}\label{EE}
\begin{split}
\mathscr V_{\alpha,\beta}(x,v)&=C^\star+\frac{1}{2}| v|^2+U(x)-\frac{\alpha}{N}\sum_{i,j=1,  j\neq i}^N \<v^{(i)},{\bf n}(x^{(i)}-x^{(j)})\> +\beta\<x,v\>,\quad (x,v)\in\mathscr K.
\end{split}
\end{equation}
By the H\"older inequality and the Cauchy–Schwarz inequality, one has
$$
\Big|-\frac{\alpha}{N}\sum_{i,j=1,  j\neq i}^N \<v^{(i)},{\bf n}(x^{(i)}-x^{(j)})\> +\beta\<x,v\>\Big|\le \frac{1}{2}(\alpha+\beta)|v|^2+\frac{1}{2}\beta|x|^2+\frac{1}{2}N\alpha.
$$

Note that $K:\mathscr D(K)\to\R$ is a radial $C^\infty$-function so that $\{x\in\R^d: |x|\ge r \}\subset  \mathscr D(K):=\{x\in \R^d: |K(x)|<\infty\}$ and
\begin{equation}\label{*E6}
C_{K,r}:= \sup_{|x|\ge r}|\nabla K(x)|<\infty
\end{equation} for every $r>0$.
By virtue of   the local boundedness of $V$ and the radial property of $K(\cdot)$, it follows  from \eqref{*E6} that for any $r_1,r_2> 0$,
\begin{align*}
  &\sum_{i=1}^N|V(x^{(i)})|\I_{\{|x^{(i)}|\le r_1\}}+\frac{1}{N}\sum_{i,j=1,j\neq i}^N |K(x^{(i)}-x^{(j)})|\I_{\{|x^{(i)}-x^{(j)}|\ge r_2\}}\\
  &\le N\max_{|u|\le r_1}|V(u)|+N|K(r_2)| + C_{K,r_2}  N (2|x|-r_2).
  \end{align*}
Thus, for any $r_1>(M_V/C_V^*)^{1/2}$ and $r_2\in(0,R_K)$, we infer from \eqref{*E1} that
\begin{align*}
\mathscr V_{\alpha,\beta}(x,v)&\le C^\star+\frac{1}{2}(1+\alpha+\beta)| v|^2+2C_{K,r_2}  N |x|\\
&\quad+\Big(1+\frac{\beta}{2C_V^*}\Big)\sum_{i=1}^N V(x^{(i)}) \I_{\{|x^{(i)}|\ge r_1\}}+\frac{1}{N}\sum_{i,j=1,j\neq i}^N  K(x^{(i)}-x^{(j)}) \I_{\{|x^{(i)}-x^{(j)}|\le r_2\}}\\
&\quad+ \Big(1+\frac{\beta  }{2C_V^*}  \Big)N \max_{|u|\le r_1}|V(u)| +\Big(\frac{\beta M_V }{2C_V^*} +\frac{\alpha}{2}+|K(r_2)|-C_{K,r_2}r_2\Big)N
 \end{align*}
and
\begin{align*}
\mathscr V_{\alpha,\beta}(x,v)&\ge C^\star+\frac{1}{2}(1-(\alpha+\beta))| v|^2+\frac{1}{2}(C_{V}^*-\beta)\sum_{i=1}^N |x^{(i)}|^2-2C_{K,r_2} N |x| \\
&\quad+\frac{1}{2}\sum_{i=1}^NV(x^{(i)})\I_{\{|x^{(i)}|\ge r_1\}}+\frac{1}{N}\sum_{ i,j=1, j\neq i}^N K(x^{(i)}-x^{(j)})\I_{\{|x^{(i)}-x^{(j)}|\le r_2\}} \\
&\quad-\Big(\frac{1}{2} \max_{|u|\le r_1}|V(u)|+|K(r_2)|-C_{K,r_2}r_2+\frac{1}{2}(\alpha+M_V) \Big)N.
 \end{align*}
In the sequel, we shall take $\alpha,\beta>0$ satisfying
\begin{equation}\label{*E4}
 \alpha  \le  
 \frac{\beta C_V^*}{2C_V^{**}},\qquad  \beta \le \frac{1}{2}(C^*_V\wedge\gamma)\wedge \frac{(C_V^*)^2}{8 \gamma}\wedge\frac{C_V^{**}}{2C_V^{**}+C_V^*}
\end{equation}
so that
$$
\alpha+\beta\le \frac{1}{2},\qquad \beta\le \frac{1}{2}C^*_V.
$$
Therefore, there exists a constant
$C^\star>0$ large enough such that
$$\mathscr V_{\alpha,\beta}(x,v)\ge 1,\quad (x,v)\in\mathscr K;$$
moreover, for any $r_1>(M_V/C_V^*)^{1/2}$ and $r_2\in(0,R_K)$,
 there are constants $0<c_1\le c_2$ (both are dependent on $r_1,r_2$) such that
\begin{equation}\label{*E10}
c_1\le\frac{\mathscr V_{\alpha,\beta}(x,v)}{1+|v|^2+\sum_{i=1}^NV( x^{(i)})\I_{\{|x^{(i)}|\ge r_1\}}+\frac{1}{N}\sum_{i,j=1,j\neq i}^NK(x^{(i)}-x^{(j)})\I_{\{|x^{(i)}-x^{(j)}|\le r_2\}}}\le c_2
\end{equation}
 holds for all $(x,v)\in\mathscr K.$

Before moving forward, let us make some comments on the preceding assumptions   and the construction of the function $\mathscr V_{\alpha,\beta}$.
\begin{remark}
\begin{itemize}
\item[{\rm(i)}] The decomposition of the potential $U(x)$ given in \eqref{EE1} has physical meanings. The term with the function $V$ stands for the confining potential due to the external forces, and in this sense Assumption $({\bf H}_V)$ is natural in the literature; see e.g. \cite{MSH,Wu}. On the other hand, the term expressed by the function $K$ stands for the interaction potential. In particular, the second condition in \eqref{*E5} roughly indicates that the repulsive forces of the interaction will produce the dissipation when the particles approach each other, and \eqref{EE3} shows that the interaction among the particles enjoys some homogeneous property.

\item[{\rm (ii)}] The $\<x,v\>$-perturbation term in the function $\mathscr V_{\alpha,\beta}$ has been frequently used in the construction of Lyapunov function for Langevin dynamics (see \cite{Wu} and \cite{LM} for regular and singular cases, respectively). In particular, from the estimates above, we can see that
    $$\mathscr V_{\alpha,\beta}(x,v)\simeq  |v|^2+U(x) \quad \hbox{ as } |v|^2+U(x) \to \infty.$$ Hence, the Lyapunov function $\mathcal V_{\alpha,\beta,\theta}(x,v)$ in Proposition \ref{pro2} below fulfills that
    $$\mathcal V_{\alpha,\beta,\theta}(x,v)\simeq (|v|^2+U(x))^{\theta/2}\quad \hbox{ as } |v|^2+U(x) \to \infty.$$     \end{itemize}

\end{remark}

The proposition below illustrates that the Lyapunov condition \eqref{E6} is still valid under another set of sufficient conditions and, most importantly,  is applicable to the Coulomb type potential.

\begin{proposition}\label{pro2}
Assume that $({\bf H}_V)$, $({\bf H}_K)$ and $({\bf H}_\nu)$ hold. Then,
concerning  $\mathcal V_{\alpha,\beta,\theta}(x,v):=\mathscr V_{\alpha,\beta}(x,v)^{{\theta}/{2}}$ with positive
$\alpha,\beta$  given in \eqref{*E4}, there exist constants $\lambda_{\mathcal V},C_{\mathcal V}>0$ such that
\begin{equation}\label{EE6}
  (\mathscr L\mathcal V_{\alpha,\beta,\theta})(x,v)\le-\lambda_{\mathcal V} \mathcal V_{\alpha,\beta,\theta}(x,v)+C_{\mathcal V},\quad (x,v)\in\mathscr K,
\end{equation}
where the infinitesimal generator $\mathscr L$ is defined as in \eqref{E9}.
\end{proposition}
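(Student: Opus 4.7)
The plan is to mimic the structure of the proof of Proposition~\ref{pro1}. Write
\[
(\mathscr L \mathcal V_{\alpha,\beta,\theta})(x,v) = \tfrac{\theta}{2}\mathscr V_{\alpha,\beta}(x,v)^{\theta/2-1}\, I_1(x,v) + I_2(x,v) + I_3(x,v),
\]
where $I_1(x,v)=\<\nn_x\mathscr V_{\alpha,\beta},v\>-\<\nn_v\mathscr V_{\alpha,\beta},\gamma v+\nn U(x)\>$ is the drift contribution and $I_2, I_3$ are the small-jump ($|z|\le 1$) and large-jump ($|z|>1$) integrals. The target is $I_1(x,v)\le -c(|v|^2+U(x))+C$; combined with the two-sided comparison \eqref{*E10} this will upgrade $\tfrac{\theta}{2}\mathscr V_{\alpha,\beta}^{\theta/2-1}I_1$ to a bound $-c'\mathscr V_{\alpha,\beta}^{\theta/2}+C'$, while $I_2$ and $I_3$ will contribute a bounded constant plus a small multiple of $\mathscr V_{\alpha,\beta}^{\theta/2}$, establishing \eqref{EE6}.

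For the drift, compute $\nn_v^{(i)}\mathscr V_{\alpha,\beta}=v^{(i)}+\beta x^{(i)}-\tfrac{\alpha}{N}\sum_{j\neq i}{\bf n}(x^{(i)}-x^{(j)})$ and the analogous $\nn_x\mathscr V_{\alpha,\beta}$. The decisive cancellation is that the $\<\nn U,v\>$ appearing in $\<\nn_x\mathscr V_{\alpha,\beta},v\>$ cancels against the $\<v,\nn U\>$ produced by $-\<\nn_v\mathscr V_{\alpha,\beta},\nn U\>$. What remains is $-(\gamma-\beta)|v|^2$ from the quadratic parts, a cross term $-\beta\gamma\<x,v\>+\tfrac{\alpha\gamma}{N}\sum_{i\neq j}\<v^{(i)},{\bf n}(x^{(i)}-x^{(j)})\>$ absorbable by Young's inequality, the key dissipative piece $-\beta\<x,\nn U\>$, the auxiliary piece $\tfrac{\alpha}{N}\sum_{i\neq j}\<{\bf n}(x^{(i)}-x^{(j)}),\nn^{(i)}U\>$, and bounded terms coming from differentiating the unit vectors ${\bf n}(x^{(i)}-x^{(j)})$ in $\nn_x\mathscr V_{\alpha,\beta}$ (these are of the form $\frac{1}{|x^{(i)}-x^{(j)}|}(I-{\bf n}\otimes{\bf n})v^{(i)}$ paired with $v$, hence controlled by $C|v|^2$). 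Using the evenness of the radial $K$, one has $\nn^{(i)}U=\nn V(x^{(i)})+\tfrac{2}{N}\sum_{j\neq i}\nn K(x^{(i)}-x^{(j)})$, so pairing indices recasts
\[
-\beta\<x,\nn U\>= -\beta\sum_{i=1}^N\<x^{(i)},\nn V(x^{(i)})\>-\tfrac{2\beta}{N}\sum_{i<j}\<x^{(i)}-x^{(j)},\nn K(x^{(i)}-x^{(j)})\>.
\]
Assumption~$({\bf H}_V)$ via~\eqref{*E1} bounds the first sum by $-\beta C_V^*\sum_i V(x^{(i)})+N\beta M_V$; assumption~$({\bf H}_K)$ via~\eqref{*E5} gives a negative contribution from the second sum on $\{|x^{(i)}-x^{(j)}|\le R_K\}$, the residual being bounded thanks to~\eqref{*E6}. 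The $V$-part of $\tfrac{\alpha}{N}\sum_{i\neq j}\<{\bf n},\nn^{(i)}U\>$ is absorbed via~\eqref{EE2}, and its $K$-part is reduced by~\eqref{EE3} to a constant multiple of the diagonal sum $\sum_{i\neq j}\<{\bf n}(x^{(i)}-x^{(j)}),\nn K(x^{(i)}-x^{(j)})\>$, which is non-positive near the singularity by~\eqref{*E5}. Choosing $\alpha,\beta$ subject to~\eqref{*E4} ensures all Young cross-terms are dominated and yields $I_1(x,v)\le -c(|v|^2+U(x))+C$.

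The jump integrals are treated as in Proposition~\ref{pro1}. Since $(\nn_v^{(i)})^2\mathscr V_{\alpha,\beta}=I_{d\times d}$, the chain rule produces a Hessian of $\mathcal V_{\alpha,\beta,\theta}$ with a rank-one correction having the sign of $\theta/2-1\le 0$, hence non-positive; the second-order Taylor remainder then gives $I_2(x,v)\le\tfrac{\theta}{4}\sum_i\int_{\{|z|\le 1\}}|z|^2\,\nu^{(i)}(\d z)<\infty$. For $I_3$, apply the sub-additivity $|a^{\theta/2}-b^{\theta/2}|\le|a-b|^{\theta/2}$ to the explicit difference $\mathscr V_{\alpha,\beta}(x,v+S_i(z))-\mathscr V_{\alpha,\beta}(x,v)=\<v^{(i)},z\>+\tfrac{1}{2}|z|^2-\tfrac{\alpha}{N}\sum_{j\neq i}\<z,{\bf n}(x^{(i)}-x^{(j)})\>+\beta\<z,x^{(i)}\>$, then use Young's inequality with small parameters to separate the $|v|^2$- and $|x|^2$-contributions, which via~\eqref{*E10} and~\eqref{*E1} are reabsorbed into a small multiple of $\mathscr V_{\alpha,\beta}^{\theta/2}$, leaving a remainder bounded by $C\sum_i\int_{\{|z|>1\}}|z|^\theta\,\nu^{(i)}(\d z)<\infty$ thanks to~$({\bf H}_\nu)$.

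The main obstacle will be the bookkeeping in the drift part, specifically showing that the triple sum $\sum_{i,j,k:\,j,k\neq i}\<{\bf n}(x^{(i)}-x^{(j)}),\nn K(x^{(i)}-x^{(k)})\>$ generated by $\tfrac{\alpha}{N}\sum\<{\bf n},\nn^{(i)} U\>$ is dominated by the diagonal singular dissipation; this is precisely what the homogeneity hypothesis~\eqref{EE3} is designed to provide. Provided $\alpha$ is chosen small relative to the dissipative constants of $\beta$ (as encoded in~\eqref{*E4}), all remaining cross-terms pose no further issue and the three estimates combine to give~\eqref{EE6}.
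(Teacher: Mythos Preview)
Your overall architecture matches the paper's proof: the same three-way split into drift, small jumps, and large jumps; the same use of \eqref{*E1}, \eqref{EE2}, \eqref{*E5}, \eqref{*E6}, \eqref{EE3} to handle the pieces of $I_{13}$; and the same treatment of $I_2$ and $I_3$. There is, however, one genuine error in your drift analysis.

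You assert that the terms arising from differentiating the unit vectors ${\bf n}(x^{(i)}-x^{(j)})$ in $\nn_x\mathscr V_{\alpha,\beta}$ are ``bounded'' and ``controlled by $C|v|^2$''. This is false: the prefactor $1/|x^{(i)}-x^{(j)}|$ is unbounded near the singular set, so no uniform constant $C$ exists. What actually rescues the argument---and what the paper uses---is a \emph{sign}. Rewriting the perturbation via ${\bf n}(-u)=-{\bf n}(u)$ as $-\tfrac{\alpha}{2N}\sum_{i\neq j}\<v^{(i)}-v^{(j)},{\bf n}(x^{(i)}-x^{(j)})\>$ and then differentiating in $x$ and pairing with $v$, one finds that this contribution to $I_1$ equals
\[
-\frac{\alpha}{2N}\cdot\frac{1}{2}\sum_{i\neq j}\frac{1}{|x^{(i)}-x^{(j)}|}\big\langle (I-{\bf n}\otimes{\bf n})(v^{(i)}-v^{(j)}),\,v^{(i)}-v^{(j)}\big\rangle\le 0,
\]
since $I-{\bf n}\otimes{\bf n}$ is an orthogonal projection (equivalently, by Cauchy--Schwarz). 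This is the paper's $I_{11}\ge 0$, which is simply discarded. Once you replace your boundedness claim by this sign observation, the remainder of your outline goes through and coincides with the paper's proof.
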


 \begin{proof}
 In the sequel, we fix $(x,v)\in\mathscr K$.
 According to the definition of $\mathscr L$, defined in \eqref{E9},
we   obtain  that
\begin{align*}
(\mathscr L\mathcal V_{\alpha,\beta,\theta})(x,v)&=\frac{\theta}{2}\mathscr V_{\alpha,\beta}(x,v)^{{\theta}/{2}-1}\big(\<\nn_x \mathscr V_{\alpha,\beta}( x, v),v\>-\<\nn_v  \mathscr V_{\alpha,\beta}( x,v),\gamma  v+\nn  U ( x)\>\big)\\
&\quad+\sum_{i=1}^N\int_{\{|z |\le1\}}\big(\mathcal V_{\alpha,\beta,\theta}(x,v+S_i(z ))-\mathcal V_{\alpha,\beta,\theta}(x,v)-\<\nn_v^{(i)}\mathcal V_{\alpha,\beta,\theta}(x,v),z \> \big)\,\nu^{(i)}(\d z )\\
&\quad+\sum_{i=1}^N \int_{\{|z |>1\}}\big(\mathcal V_{\alpha,\beta,\theta}(x,v+S_i(z ))-\mathcal V_{\alpha,\beta,\theta}(x,v) \big)\,\nu^{(i)}(\d z )\\
&=:\frac{\theta}{2}\mathscr V_{\alpha,\beta}(x,v)^{{\theta}/{2}-1}I_1(x,v)+I_2(x,v)+I_3(x,v).
\end{align*}
Hereinafter, it boils down  to estimating  the quantities $I_1,I_2$  and $I_3$, respectively, in order to achieve \eqref{EE6}.

Owing to ${\bf n}(-u)=-{\bf n}(u)$ for a non-zero vector $u\in\R^d,$ it is easy to see that
$$
\sum_{i,j=1,j\neq i}^N \<v^{(i)},{\bf n}(x^{(i)}-x^{(j)})\>
=\frac{1}{2}\sum_{i,j=1,j\neq i}^N\<v^{(i)}-v^{(j)},{\bf n}(x^{(i)}-x^{(j)})\>.
$$
Consequently,
 $\mathscr V_{\alpha,\beta}$  can be reformulated  as
$$
\mathscr V_{\alpha,\beta}(x,v)=C^\star+\frac{1}{2}| v|^2+U(x) -\frac{\alpha}{2N}\sum_{i,j=1,j\neq i}^N \<v^{(i)}-v^{(j)},{\bf n}(x^{(i)}-x^{(j)})\> +\beta\<x,v\>.
$$
With this at hand, we have
 \begin{align*}
 \nn_x^{ (i)  } \mathscr V_{\alpha,\beta}(x,v)&=\nn^{(i)}_x U( x)+\beta v^{(i)} -\frac{\alpha}{2N}\Theta(x^{(i)},v),\\
 \nn_v^{(i)} \mathscr V_{\alpha,\beta}(x,v)& =v^{(i)}-\frac{\alpha  }{2N}\sum_{j=1, j\neq i}^N {\bf n}( x^{(i)}-x^{(j)}) +\beta x^{(i)},
 \end{align*}
 where
 $$\Theta_i(x,v):= \sum_{j=1, j\neq i}^N\frac{1}{|x^{(i)}-x^{(j)}|} \big( \I_{d\times d}-({\bf n}(x^{(i)}-x^{(j)}) \otimes {\bf n}(x^{(i)}-x^{(j)})\big) (v^{(i)}-v^{(j)}).$$
 Thus,
we derive that
\begin{align*}
 I_1(x,v)&=\sum_{i=1}^N\<\nn_x^{ (i)  } \mathscr V_{\alpha,\beta}(x,v),v^{(i)}\>-\sum_{i=1}^N\<\nn_v^{ (i)  } \mathscr V_{\alpha,\beta}(x,v),\gamma  v^{(i)}+\nn^{(i)}_x U( x)\>\\
 &=-(\gamma-\beta)|v|^2-\frac{\alpha}{2N}\sum_{i=1}^N\<\Theta_i(x,v),v^{(i)}\>\\
 &\quad+\bigg(\frac{\alpha \gamma }{2N} \sum_{i,j=1, j\neq i}^N\< {\bf n}( x^{(i)}-x^{(j)}) ,   v^{(i)}\>-\beta\gamma\sum_{i=1}^N\<  x^{(i)},  v^{(i)}\>\bigg)\\
 &\quad+\bigg(\frac{\alpha}{2N}\sum_{i,j=1,j\neq i}^N\<  {\bf n}( x^{(i)}-x^{(j)}) ,\nn^{(i)}_x U( x)\> -\beta \sum_{i=1}^N\<x^{(i)},\nn^{(i)}_x U( x)\>  \bigg)\\
 &=:-(\gamma-\beta)|v|^2-\frac{\alpha}{2N}{  I_{11}(x,v)}+ I_{12}(x,v) +I_{13}(x).
\end{align*}

Via the Cauchy–Schwarz inequality, besides $\Theta_i(x,-v)=-\Theta_i(x,v)$,
 it is obvious that
  \begin{align*}
  I_{11}(x,v)
 =\frac{1}{2}\sum_{i,j=1,j\neq i}^N\frac{1}{|x^{(i)}-x^{(j)}|^3}\big(|v^{(i)}-v^{(j)}|^2|x^{(i)}-x^{(j)}|^2-\<v^{(i)}-v^{(j)},x^{(i)}-x^{(j)}\>^2 \big)\ge0.
 \end{align*}
Applying  H\"older's inequality and
Cauchy–Schwarz's inequality, in addition to the basic inequality: $2ab\le a^2+b^2$ for $a,b\in\R,$  yields
\begin{align*}
   I_{12}(x,v)
   &\le \frac{2\beta\gamma^2}{(C_V^*)^2}|v|^2+\frac{1}{4}\beta (C_V^*)^2|x|^2+\frac{1}{16\beta}(\alpha C_V^*)^2N \\
   &\le \frac{2\beta\gamma^2}{(C_V^*)^2}|v|^2+\frac{1}{4}\beta  C_V^* \sum_{i=1}^N V(x^{(i)})+\frac{1}{4}N\beta  C_V^* M_V+\frac{1}{16\beta}(\alpha C_V^*)^2N,
   \end{align*}
 where the last display is due to \eqref{*E1}.
Note that
\begin{align*}
	\nn^{(i)}_x U(x) &= \nn V( x^{(i)}) +  \frac{1}{N}\sum_{k=1,k\neq i}^N \nn^{(i)} K(x^{(k)}-x^{(i)})+  \frac{1}{N}\sum_{l=1,l\neq i}^N \nn^{(i)} K(x^{(i)}-x^{(l)})\\
	&= \nn V( x^{(i)}) +  \frac{2}{N}\sum_{k=1,k\neq i}^N \nn^{(i)} K(x^{(i)}-x^{(k)}).
	\end{align*}
Since $K(\cdot)$ is  a radial function, we have $\nn^{(i)}K(x^{(i)}-x^{(j)})=-\nn^{(j)}K(x^{(j)}-x^{(i)})$, which implies
$$
 \frac{\beta}{N}\sum_{i,j=1,j\neq i}^N \<x^{(i)},\nn^{(i)} K(x^{(i)}-x^{(j)})\>=\frac{\beta}{2N}\sum_{i,j=1,j\neq i}^N \<x^{(i)}-x^{(j)},\nn^{(i)} K(x^{(i)}-x^{(j)})\>.
$$
 This
  fact, together with the expression of
 $U$, given in \eqref{EE1}, enables us to obtain from \eqref{*E1} and \eqref{EE2} that
\begin{equation}\label{EE4}
	\begin{split}
		I_{13}(x)&=-\beta\sum_{i=1}^N\<x^{(i)},\nn V(x^{(i)})\>+\frac{\alpha}{2N}\sum_{i,j=1,j\neq i}^N \<{\bf n}( x^{(i)}-x^{(j)}),\nn V(x^{(i)})\>\\
		&\quad-\frac{\beta}{N}\sum_{i,j=1,j\neq i}^N \<x^{(i)}-x^{(j)},\nn^{(i)} K(x^{(i)}-x^{(j)})\>\\
		&\quad+\frac{\alpha}{N^2}\sum_{i,j,k=1,j,k\neq i}^N \<{\bf n}( x^{(i)}-x^{(j)}),\nn^{(i)} K(x^{(i)}-x^{(k)}) \>\\
		&\le-\big(\beta C_{V}^{*}-\alpha C_{V}^{**}/2\big)\sum_{i=1}^NV(x^{(i)}) +(\beta M_V +M_V^*\alpha/2)N +\frac{1}{N}\sum_{i,j=1,j\neq i}^NJ(x^{(i)}-x^{(j)}),
	\end{split}
\end{equation}
where for a non-zero vector $u\in\R^d,$
$$J(u):=\Big(\frac{\alpha C_K^{**}}{N|u|}-\beta\Big)\<u,\nn K(u)\>.$$
Let $r^*= R_K\wedge\frac{\alpha C_K^{**}}{2N\beta}$. Then,
for any non-zero vector $u\in\R^d,$ we obtain from \eqref{*E6} that
\begin{align*}
J(u)&=
\Big(\frac{\alpha C_K^{**}}{N|u|}-\beta\Big)\<u,\nn K(u)\>\I_{\{|u|\le r^*\}}+\Big(\frac{\alpha C_K^{**}}{N|u|}-\beta\Big)\<u,\nn K(u)\>\I_{\{|u|\ge r^*\}}\\
&\le \frac{\alpha C_K^{**}}{2N|u|} \<u,\nn K(u)\>\I_{\{|u|\le r^*\}}+\Big(\frac{\alpha C_K^{**}}{N}+\beta|u|\Big)C_{K,r*}\I_{\{|u|\ge r^*\}},
\end{align*} where in the  inequality above we utilize the fact that $\frac{\alpha C_K^{**}}{2N|u|}-\beta\ge0$ whenever $|u|\le \frac{\alpha C_K^{**}}{2N\beta}$.
In particular, with the help of \eqref{*E1} and \eqref{*E5}, we deduce that there exists a constant $C_0>0$ such  that for  $u=x^{(i)}-x^{(j)}$,
\begin{align*}
		J(u)		&\le \frac{\alpha C_K^{**}}{2N|x^{(i)}-x^{(j)}|} \<x^{(i)}-x^{(j)},\nn K(x^{(i)}-x^{(j)})\>\I_{\{|x^{(i)}-x^{(j)}|\le r^*\}}\\
&\quad+\Big(\frac{\alpha C_K^{**}}{N}+\beta|x^{(i)}-x^{(j)}|\Big)C_{K,r*}\I_{\{|x^{(i)}-x^{(j)}|\ge r^*\}}\\
		&\le \frac{\alpha C_K^{**}}{2N|x^{(i)}-x^{(j)}|} \<x^{(i)}-x^{(j)},\nn K(x^{(i)}-x^{(j)})\>\I_{\{|x^{(i)}-x^{(j)}|\le r^*\}}+\Big(\frac{\alpha C_K^{**}}{N}+\beta|x^{(i)}|+\beta|x^{(j)}|\Big)C_{K,r*}\\
		&\le -\frac{\alpha C_K^*C_K^{**}}{2N}\ K(x^{(i)}-x^{(j)})\I_{\{|x^{(i)}-x^{(j)}|\le r^*\}}+\frac{1}{8}\beta C_V^*V(x^{(i)})+\frac{1}{8}\beta C_V^*V(x^{(j)})+C_0,
	\end{align*}
 Now, plugging the previous estimate on $J(\cdot)$ into the evaluation on $I_{13}$, given in \eqref{EE4}, we find that
\begin{align*}
		I_{13}(x)
		&\le -\Big(\frac{3}{4}\beta C_{V}^{*}
-\frac{1}{2}\alpha C_{V}^{**} \Big)\sum_{i=1}^NV(x^{(i)})  -\frac{\alpha C_K^*C_K^{**}}{2N^2}\sum_{i,j=1,j\neq i}^NK(x^{(i)}-x^{(j)})\I_{ \{|x^{(i)}-x^{(j)}|\le r^*\}}+C_1
	\end{align*}
for some constant  $  C_1>0.$

Consequently, in accordance with the estimates on $I_{11},I_{12},I_{13},$ we derive that for some constant $C_2>0,$
\begin{align*}
		I_1(x,v)&\le -\Big(\gamma-\beta-\frac{2\beta\gamma^2}{(C_V^*)^2}\Big)|v|^2   -\frac{1}{2} \big( \beta C_{V}^{*}
- \alpha C_{V}^{**} \big)\sum_{i=1}^NV(x^{(i)})\\
		& \quad-\frac{\alpha C_K^*C_K^{**}}{2N^2}\sum_{i,j=1,j\neq i}^NK(x^{(i)}-x^{(j)})\I_{ \{|x^{(i)}-x^{(j)}|\le r^*\}}+C_2.
	\end{align*}
According to the alternatives of $\alpha,\beta>0$, introduced in \eqref{*E4}, we have
$$
\gamma-\beta-\frac{2\beta\gamma^2}{(C_V^*)^2}\ge\frac{\gamma}{4},\quad \frac{1}{2} \big( \beta C_{V}^{*}
- \alpha C_{V}^{**} \big)\ge\frac{1}{4}\beta C_{V}^{*}.
$$
Subsequently,
 by taking \eqref{*E10} into consideration, there exist constants
$c_0, C_3>0$ such that
\begin{equation}\label{e:proof11}
 I_1(x,v)
\le -c_0 \mathcal V_{\alpha,\beta,\theta}(x,v)+C_3.
\end{equation}

By following exactly the line to derive \eqref{e:proof2}, we have
\begin{equation}\label{e:proof12}
I_2(x,v)\le\frac{\theta}{4}\sum_{i=1}^N\int_{\{|z|\le1\}}|z|^2\,\nu^{(i)}(\d z)<\8.
\end{equation}
 Next, applying the inequality: $|a^\kk-b^\kk|\le |a-b|^\kk$ for $a, b\ge0 $ and $\kk\in(0,1]$, we find by the Young inequality and \eqref{*E10} that
 \begin{equation}\label{e:proof13}
	\begin{split}
		I_3(x,v)&\le  \sum_{i=1}^N \int_{\{|z|>1\}}\Big|\<v^{(i)},z\>+ \frac{1}{2}| z|^2-\frac{\alpha}{N}\sum_{k=1,k\ne  i}^N \< z,{\bf n}(x^{(i)}-x^{(k)})\>+\beta\<x^{(i)},z\> \Big|^{{\theta}/{2}}\,\nu^{(i)}(\d z)\\
		&\le \sum_{i=1}^N\int_{\{|z|>1\}}\big( | z|^2 + (\alpha +|v^{(i)}|+\beta|x^{(i)}|)|z|\big)^{{\theta}/{2}}\, \nu^{(i)}(\d z)\\
		&\le  (1+\alpha +|v|+\beta|x|)^{{\theta}/{2}}\sum_{i=1}^N\int_{\{|z|>1\}}|z|^\theta\,\nu^{(i)}(\d z)\\
		&\le \frac{c_0}{2} \mathcal V_{\alpha,\beta,\theta}(x,v)+C_4
	\end{split}
\end{equation}
for some constant $C_4>0,$ where  the constant $c_0$ is that given in \eqref{e:proof11}.

Finally, the desired assertion \eqref{EE6} is available by taking  \eqref{e:proof11},  \eqref{e:proof12} and \eqref{e:proof13} into account.
\end{proof}

Before we end this section,
we present an example to explain the applicability of Proposition \ref{pro2} to the Coulomb potential (see e.g. \cite{LM}).

\begin{example}\label{ex2}   Let
 \begin{equation}\label{EE7}
 V(x)= A(1+|x|^2)^{\aa/2};\quad K(x)=-\log|x|\quad {\rm if}\quad d=2,\quad K(x)=|x|^{2-d} \quad {\rm if} \quad d\ge3,
  \end{equation}
where $\alpha\ge2  $
and $A>0$.

Obviously, $V\in C^\infty(\R^d)$,
$\<\nn V(x),x\>=A\alpha (1+|x|^2)^{\alpha/2-1}|x|^2$, and $|\nabla V(x)|\le A\alpha (1+|x|^2)^{\alpha/2-1}|x|$ for all $x\in \R^d$. With these estimates, we can see that
\eqref{*E1} and \eqref{EE2} hold.
Therefore, the Assumption $({\bf H}_V)$ is valid.

Clearly, $K\in C^\8(\mathscr D(K))$ and $K(x)\ge0$ for $x\in\mathscr D(K)$ with $|x|\le1.$
Note that for all $d\ge 2,$
\begin{equation}\label{EE5}
\frac{1}{|x|}\<x,\nn K(x)\>=-(1\vee (d-2))\frac{1}{|x|}|x|^{2-d},\quad x\in \mathscr D(K).
\end{equation}
In particular, for the case  $d=2$,
$$
\frac{1}{|x|}\<x,\nn K(x)\>=- \frac{1}{|x|} \le -\log\frac{1}{|x|}=-K(x), \quad x\in \mathscr D(K),
$$
where we used the basic inequality: $\log \alpha\le \alpha-1$ for $\alpha>0.$
Then, we deduce from \eqref{EE5} that for all $x\in\mathscr D(K)$ with $|x|\le1$,
$$
\frac{1}{|x|}\<x,\nn K(x)\>\le -K(x).
$$
Next, for each $r>0$ and $x\in\mathscr D(K)$ with $|x|\ge r$,
$$
| \nn K(x) |=(1\vee (d-2))  |x|^{1-d } \le (1\vee (d-2))  r^{1-d }<\8.
$$
Furthermore, applying \cite[Lemma 4.3]{LM} yields
\begin{align*}
&\sum_{i,j,k=1:j,k\neq i}^N \<{\bf n}( x^{(i)}-x^{(j)}),\nn  K(x^{(i)}-x^{(k)}) \>\\
&=-(1\vee (d-2))\sum_{i,j,k=1: j,k\neq i}^N\frac{1}{|x^{(i)}-x^{(k)}|^d} \<{\bf n}( x^{(i)}-x^{(j)}), x^{(i)}-x^{(k)}  \>\\
&\le -(1\vee (d-2))\sum_{i,j=1: j\neq i}^N\frac{1}{|x^{(i)}-x^{(j)}|^{d-1}}\\
&=\sum_{i,j=1:j\neq i}^N\<{\bf n}( x^{(i)}-x^{(j)}),\nn  K(x^{(i)}-x^{(j)})\>.
\end{align*}
In a word, the hypothesis $({\bf H}_K)$ is fulfilled  by the  Coulomb potential $K$, given in \eqref{EE7}.
\end{example}

\section{Strong Feller and Irreducibility}\label{section3}

In this section, we suppose that $ ( {\bf Z}_t)_{t\ge0}:=((Z_t^{(1)},\cdots,Z_t^{(N)}))_{t\ge0}$ so that, for any $1\le i\le N$,
$(Z^{(i)}_t)_{t\ge0}$ is a $d$-dimensional (rotationally invariant)  symmetric $\alpha_i$-stable L\'{e}vy process, and $ { (Z_t^{(1)})}_{t\ge0}$, $\cdots,$ $(Z_t^{(N)})_{t\ge0}$ are mutually independent. We will verify that the SDE \eqref{E1} has the strong Feller and irreducible properties, under the Lyapunov condition investigated in the preceding section.
For the strong Feller property, we will make full use of the H\"{o}rmander theorem for non-local operators (developed greatly  in \cite{Zhang, Zhang5}), invoke the truncation idea and combine  with  the  continuity
  of the Dirichlet heat kernel;
 as for the Lebesgue irreducible property, we will solve  the issue on approximate controllability  of the associated deterministic system and take advantage of the time-change idea for  SDEs driven by subordinated Brownian motions (see e.g. \cite{Zhang2, Zhang1, Zhang}).

For the sake of  simplicity of our interpretation, we stick  on  the single
particle (i.e., $N=1$) case, since the   arguments to be implemented  work essentially for general cases
on the multi-particle system
as well just accompanying  with some complicated notations. Therefore, in the following, we fix $N=1$
and write $(X_t,V_t)_{t\ge0}$ in lieu of $({\bf X}_t,{\bf V}_t)_{t\ge0}$. Moreover,
for the technical reason,
 we shall take
the driven noise $({\bf Z}_t)_{t\ge0}$ in \eqref{E1} to be a $d$-dimensional symmetric $\alpha$-stable process, which is also denoted by $(L_t)_{t\ge 0}$ in what follows.
We also assume that there exists a Lyapunov function $\mathcal V(x,v)\ge1$ for the process $(X_t,V_t)_{t\ge0}$; that is,  $\mathcal V(x,v) \to\8$
 as $H(x,v):=\frac{1}{2}\gamma|v|^2+U(x)\to \8$, and
 there exist constants $\lambda_{\mathcal V}\ge0,C_{\mathcal V}>0$ such that
\begin{equation}\label{e:Ly}
(\mathscr L\mathcal V)(x,v)\le { \lambda_{\mathcal V}} \mathcal V(x,v)+C_{\mathcal V},\quad (x,v)\in\mathscr K,
\end{equation} where $\mathscr{K}=\mathscr{D}(U)\times \R^d$ with $\mathscr{D}(U):=\{x\in \R^d: U(x)<\infty\}$.
Apparently, the Lyapunov condition \eqref{e:Ly} is fulfilled once the assumptions in Proposition \ref{pro1} or Proposition \ref{pro2} are satisfied.

In the sequel, we aim to address the issues on the strong Feller property and the irreducible property of the Markov process $(X_t,V_t)_{t\ge0}$, one-by-one.

\begin{proposition}\label{strong}Under assumptions above, the process $(X_t,V_t)_{t\ge0}$ given in  \eqref{E1} with $({\bf Z}_t)_{t\ge0}$ being a symmetric $\alpha$-stable process has the strong Feller property. \end{proposition}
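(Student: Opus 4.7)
The plan is to combine a truncation of the singular potential, the H\"ormander theorem for non-local kinetic operators, and an exit-time estimate furnished by the Lyapunov function \eqref{e:Ly}. Fix $(x_0,v_0)\in\mathscr K$, $t>0$, and a bounded measurable $f$ on $\mathscr K$; it suffices to prove that $(x,v)\mapsto P_tf(x,v)$ is continuous at $(x_0,v_0)$.

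First I would remove the singularity of $U$ by localization. Since $\mathscr D(U)$ is open and $H(x,v):=\frac{1}{2}\gamma|v|^2+U(x)$ is continuous on $\mathscr K$, one can pick $n_0$ large enough that $(x_0,v_0)$ lies in the open sublevel set $\{H<n_0\}$, and for each $n\ge n_0$ construct $U_n\in C^\infty(\R^d)$ coinciding with $U$ on $\{U\le n\}$ and having bounded derivatives of every order globally. Let $(X^n_t,V^n_t)_{t\ge0}$ solve \eqref{E1} with $U$ replaced by $U_n$. Its generator $\mathscr L_n$ has globally smooth bounded drift together with the symmetric $\alpha$-stable jump part acting only on the velocity component. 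Since the commutator $[\partial_v,v\cdot\nn_x]=\partial_x$ generates the missing horizontal directions, the non-local H\"ormander theorem developed by Zhang and coauthors applies and yields a jointly smooth transition density $p^n_t((x,v),(y,w))$; in particular $P^n_t$ is strong Feller and the Dirichlet semigroup of $(X^n,V^n)$ on any open subset of $\mathscr K$ depends continuously on the initial data.

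Next I would glue the two dynamics via the stopping time $\tau_n:=\inf\{s\ge0:H(X_s,V_s)\ge n\}$. Pathwise uniqueness up to $\tau_n$ gives $(X_s,V_s)=(X^n_s,V^n_s)$ for $s<\tau_n$, so
\[P_tf(x,v)=\Ee^{(x,v)}\bigl[f(X^n_t,V^n_t)\I_{\{\tau_n>t\}}\bigr]+\Ee^{(x,v)}\bigl[f(X_t,V_t)\I_{\{\tau_n\le t\}}\bigr].\]
The first term is the Dirichlet semigroup of $(X^n,V^n)$ on $\{H<n\}$ applied to $f$, continuous at $(x_0,v_0)$ by the previous step. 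For the second term, Dynkin's formula applied to $\e^{-\lambda_{\mathcal V}s}\mathcal V(X_{s\wedge\tau_n},V_{s\wedge\tau_n})$ together with \eqref{e:Ly} yields a Markov-type inequality
\[\Pp^{(x,v)}(\tau_n\le t)\le \frac{\e^{\lambda_{\mathcal V}t}\bigl(\mathcal V(x,v)+C_{\mathcal V}t\bigr)}{\inf_{(y,w):\,H(y,w)\ge n}\mathcal V(y,w)},\]
whose right-hand side tends to $0$ uniformly for $(x,v)$ in a small neighborhood of $(x_0,v_0)$ because $\mathcal V(y,w)\to\8$ as $H(y,w)\to\8$. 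Hence the second term is uniformly small as $n\to\8$ and the first is continuous, so $P_tf$ is continuous at $(x_0,v_0)$.

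The main obstacle is the H\"ormander step in the non-local kinetic setting, where the only source of randomness is the vertical stable noise and horizontal regularity must be extracted from its commutator with the transport field $v\cdot\nn_x$. For the truncated smooth problem this is exactly what the non-local hypoellipticity results of Zhang and coauthors provide, but they must be invoked with constants depending only on the $C^k$-bounds of $U_n$ over $\{U\le n\}$; a secondary but essentially routine point is the continuity of the Dirichlet semigroup of $(X^n,V^n)$ on the open set $\{H<n\}$, which we expect to follow from joint smoothness of $p^n_t$ together with the quasi-continuity of exit times guaranteed by the non-degeneracy of the stable noise in the velocity direction.
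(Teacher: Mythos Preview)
Your proposal is correct and follows essentially the same route as the paper: truncate the singular potential to obtain a globally smooth SDE, verify the H\"ormander bracket condition and invoke Zhang's non-local hypoellipticity to get a continuous transition density for the truncated process, express $P_tf$ as the Dirichlet semigroup of the truncated process plus a remainder supported on $\{\tau_n\le t\}$, and kill the remainder via the Lyapunov estimate \eqref{e:Ly}. The paper truncates on the sublevel sets $\{\mathcal V<R\}$ rather than $\{H<n\}$ and cites Chung--Zhao for the continuity of the Dirichlet heat kernel, but these are cosmetic differences; your use of $H$ works because $U\le H$ on $\mathscr K$ and $\mathcal V\to\infty$ as $H\to\infty$, which is exactly what your Markov-type bound needs.
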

\begin{proof}

For $z=(x,v)\in \mathscr K$, define
\begin{equation}\label{e:not}
b(z)=\left(
         \begin{array}{cc}
           v  \\
           -\gamma v-\nabla U(x)\\
         \end{array}
       \right),\quad \sigma=\left(
                              \begin{array}{cc}
                               {\bf0}_{d\times d} \\
                             \I_{d\times d}\\
                              \end{array}
                            \right),
                            \end{equation}
 where ${\bf 0}_{d\times d}$  and $\I_{d\times d}$     stand for the $d\times d$ zero matrix and the  $d\times d$ identity matrix, respectively.
With the notation above at hand, the SDE \eqref{E1} can be rewritten as the following compact form: for the shorthand notation $Z_t:=(X_t,V_t),$
\begin{equation}\label{e:SDE1}
\d Z_t=b(Z_t)\,\d t+\si\,\d L_t.
\end{equation}

For any $R\ge 1$, let $$\mathcal A_R=\{(x,v)\in \mathscr K: \mathcal V(x,v)< R\}.$$ In particular, $\mathcal A_R$ is an open set in $\mathscr K$. Define $U_R\in C^\infty(\R^d)$ with  bounded  derivatives such that $U_R(x)=U(x)$ for all $(x,v)\in \mathcal A_R$.
Consider the process $(Z_t^R)_{t\ge0}:=(X_t^R,V_t^R)_{t\ge0}$, which solves the SDE
\begin{equation}\label{e:SDE2-}
\d Z_t^R=b_R(Z_t^R)\,\d t+\si\,\d L_t,
\end{equation}
where $b_R$ is defined as in \eqref{e:not} with $U_R$ in place of $U$.  In particular, since $b_R(z)$ is globally Lipschitz continuous on $\R^{2d}$, the SDE \eqref{e:SDE2-} has a unique strong solution $(Z_t^R)_{t\ge0}$ which is non-explosive. Let $\si_i=\left(
                              \begin{array}{cc}
                               {\bf0}  \\
                             e_i\\
                              \end{array}
                            \right)\in\R^{2d} , i=1,2,\cdots,d$, be the $i$-th column of $\si$ defined in \eqref{e:not},  where ${\bf0}\in\R^d$ is the zero vector and $(e_i)_{1\le i\le d}$ is the standard orthogonal basis of $\R^d.$
 Then, the Lie bracket between $b_R$ and $\si_i$, denoted by  $[b_R,\sigma_i]\in\R^d$, is given by
$$[b_R,\sigma_i](z)=\nn \sigma_i\cdot b_R(z)
-\nabla b_R(z)\cdot \sigma_i =\left(
                                                          \begin{array}{cc}
                                                           - e_i \\
                                                           \gamma e_i \\
                                                          \end{array}
                                                      \right).$$
Whence, it holds that for   all $z\in \mathscr K$,
$$\mbox{Rank}\{\si_1,\cdots,\sigma_d,[b_R,\sigma_1](z),\cdots, [b_R,\sigma_d](z)\}=2d.$$   According to { \cite[Theorem 1.1 and
Theorem 1.3]{Zhang}}, the process   $(Z_t^R)_{t\ge0}$ has a transition density function  
 $\rho_{R}(t,z',z'')$ with respect to the Lebesgue measure, so that $(z',z'')\mapsto \rho_{R}(t,z',z'')$  is bounded and continuous with respect to $(z',z'')\in\mathscr K\times \mathscr K$ for any fixed $t>0$.

In the following,  let $(P_t)_{t\ge0}$ be the semigroup of the process $(Z_t)_{t\ge0}$. Fix $z=(x,v)\in \mathscr K$ and $t>0$, and  choose $R_0>0$ sufficiently large  such that $z\in \mathcal A_{R_0}$. Then, for any $R\ge R_0$ and $f\in \mathscr B_b(\mathscr K)$, we obviously have
\begin{equation}\label{strong-F}\begin{split}P_t f(z)=\Ee^{z} f(Z_t)=&\Ee^{z} \big(f(Z_t)\I_{\{\tau_{\mathcal A_R}\le t\}}\big)+ \Ee^{z}\big( f(Z_t)\I_{\{t<\tau_{\mathcal A_R}\}}\big)\\
=&\Ee^{z} \big(f(Z_t)\I_{\{\tau_{\mathcal A_R}\le t\}}\big)+\Ee^{z} \big(f(Z_t^R)\I_{\{t<\tau_{\mathcal A_R}^*\}}\big),\end{split}\end{equation} where
$$ \tau_{\mathcal A_R}:=\inf\{t>0: Z_t\notin \mathcal A_R\},\quad \tau_{\mathcal A_R}^*:=\inf\{t>0: Z_t^R\notin \mathcal A_R\}.$$ Here, in the last equality we used the fact that with the starting point $z$, the law of $Z_t^R$ coincides with  that of $Z_t$ when both associated processes do not exit the open set $\mathcal A_R$ before time $t$.

As mentioned above,    the transition density function  
 $\rho_R(t,z',z'')$ associated with $Z_t^R$ is bounded and continuous with respect to  
  $(z',z'')\in \mathscr{K}\times \mathscr{K}$ for any fixed $t>0$. Then, by following the standard approach,  we can see that the function $z\mapsto \Ee^{z} f(Z_t^R\I_{\{t<\tau_{\mathcal A_R}^*\}})$ is continuous for all $R\ge R_0$. Indeed, $\Ee^{z}\big( f(Z_t^R)\I_{\{t<\tau_{\mathcal A^*_R}\}}\big)$ corresponds to the Dirichlet semigroup of the process $(Z_t^R)_{t\ge0}$, and, thanks to arguments in \cite[Section 2.2]{CZ}, the associated transition density function (which is called the Dirichlet heat kernel in the literature) of the Dirichlet semigroup is also continuous.

Next, we turn to estimate  the term $I(t):=\Ee^{z}\big( f(Z_t)\I_{\{\tau_{\mathcal A_R}\le t\}}\big)$ for $f\in \mathscr B_b(\mathscr K)$. Note that for any $t>0$,
$$|I(t)|\le \|f\|_\infty \Pp^{z}(\tau_{\mathcal A_R}\le t).$$
According to \eqref{e:Ly} and the It\^{o} formula, it holds that for all $R\ge R_0$ and $t>0$,
$$\e^{-\lambda_{\mathcal V} t}R\, \Pp^{z}(\tau_{\mathcal A_R}\le t) \le \Ee^{z}\big(\e ^{-\lambda_{\mathcal V}(t\wedge \tau_{{\mathcal A}_R}   )}\mathcal V(Z_{t\wedge \tau_{{\mathcal A}_R}   })\big)\le \mathcal V(z)+(1-\e^{-\lambda_{\mathcal V} t} )  C_{\mathcal V}/\lambda_{\mathcal V} .$$
In particular,
\begin{equation}\label{E11}
\Pp^{z}(\tau_{\mathcal A_R}\le t)\le \frac{1}{R}\big(\mathcal V(z)+(1-\e^{-\lambda_{\mathcal V} t} )   C_{\mathcal V}/\lambda_{\mathcal V}\big)\e^{\lambda_{\mathcal V} t}.
\end{equation}
Thus, the term $I(t)$ can be neglected for $R$ large enough.

Combining with all the conclusions above, we can prove that the function $z\mapsto P_t f(z)$ is continuous, and so the process $(Z_t)_{t\ge0}$ has the strong Feller property.
\end{proof}

In order to further investigate the irreducibility of $(Z_t)_{t\ge0}$ solving \eqref{E1}, we further write $L_t=W_{S_t}$ as a form of a $d$-dimensional subordinated Brownian motion; that is, $(W_t)_{t\ge0}$  is a $d$-dimensional Brownian motion and  $(S_t)_{t\ge0}$ is an $\alpha/2$-stable subordinator, which is independent of $(W_t)_{t\ge0}. $

Below we introduce the canonical probability space corresponding to the subordinated Brownian motion $(W_{S_t})_{t\ge0}$. Let $(\mathbb W, \mathscr{B}(\mathbb W), \mu_{\mathbb W})$ be the standard Wiener space. In detail,    $\mathbb W$ is the space of all continuous functions    $\omega:\R_+\to\R^d$ with   $\omega_0={\bf0}$, which is equipped with the locally uniform convergence topology,
 and $\mu_{\mathbb W}$ is the Wiener measure, under which the coordinate process $W_t(\omega):=\omega_t$ is a standard $d$-dimensional Brownian motion.
  Let $\mathbb S$ be the space of all  increasing and  c\'{a}dl\'{a}g functions   $\ell: \R_+\to\R_+$ with $\ell_0={\bf0}$. Suppose that $\mathbb S$ is endowed with the Skorohod metric, and the probability measure $\mu_{\mathbb S}$ so that the coordinate process $S_t(\ell):=\ell_t$ is distributed with the law of the $\alpha/2$-stable subordinator. In what follows, we shall work on the probability space
\begin{equation*}
(\Omega,\mathscr F,\Pp):=(\mathbb W\times\mathbb S,\mathscr B(\mathbb W)\times\mathscr B(\mathbb S), \mu_{\mathbb W}\times\mu_{\mathbb S}).
\end{equation*}
Under this probability space, the coordinate process $L_t(\omega,\ell):=\omega_{\ell_t}$ is a symmetric $\alpha$-stable process.

\begin{proposition}\label{irre}
Assume that the Assumptions in Proposition $\ref{strong}$ hold. Then,
the process $(X_t,V_t)_{t\ge0}$ solving \eqref{E1} with $({\bf Z}_t)_{t\ge0}$ being a symmetric $\alpha$-stable process
is Lebesgue irreducible, i.e.,  for  any $z,z^*\in\mathscr K$ and $t,\vv>0$,
\begin{equation}\label{E12}
\P^z(|(X_t,V_t)-z^*|\le \vv)>0.
\end{equation}

\end{proposition}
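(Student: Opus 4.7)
\textbf{Plan of proof for Proposition \ref{irre}.} The strategy is to (a) reduce the problem to an approximate controllability statement for the deterministic control system associated to \eqref{E1}, (b) use the Stroock--Varadhan support theorem for a Brownian-driven auxiliary SDE, and (c) pass to the symmetric $\alpha$-stable case via the subordination representation $L_s = W_{S_s}$ and independence of $W$ and $S$. Throughout, fix $z=(x,v), z^*=(x^*,v^*)\in\mathscr K$ and $t,\vv>0$, and work on the product space $(\mathbb W\times\mathbb S,\mu_{\mathbb W}\times\mu_{\mathbb S})$ introduced before the statement.

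The first and conceptually cleanest step is to build, by hand, a smooth reference trajectory realising the transition from $z$ to $z^*$. Since $\mathscr D(U)$ is open and path-connected, I can choose $\phi\in C^\8([0,t];\mathscr D(U))$ whose image lies in a compact subset of $\mathscr D(U)$ and which satisfies the four boundary conditions $\phi(0)=x,\ \phi(t)=x^*,\ \dot\phi(0)=v,\ \dot\phi(t)=v^*$ (first connect $x$ to $x^*$ by a smooth curve staying away from the singular set, then adjust the parametrisation near the endpoints with smooth bump functions to match the velocity data). Setting
$$
u(s):=\ddot\phi(s)+\gamma\dot\phi(s)+\nabla U(\phi(s)),\qquad s\in[0,t],
$$
one obtains a continuous, bounded control for which $(\phi,\dot\phi)$ solves the controlled system $\dot x=v,\ \dot v=-\gamma v-\nabla U(x)+u(s)$ exactly. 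A pathwise stability argument (Gronwall, using the compactness of the image of $\phi$ and the local Lipschitz character of $\nabla U$ there) then shows that if a driving path $\omega\in D([0,t];\R^d)$ is close to $c(s):=\int_0^s u(r)\,\d r$ in a suitable pathwise sense, the solution of \eqref{E1} with $L$ replaced by $\omega$ is within $\vv$ of $z^*$ at time $t$.

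To produce such driving paths with positive probability I use the subordination $L_s=W_{S_s}$ and condition on $S$: by independence,
$$
\P^z(|(X_t,V_t)-z^*|\le\vv)=\int_{\mathbb S}\mu_{\mathbb W}\bigl\{\omega: |Z_t^\ell(\omega)-z^*|\le\vv\bigr\}\,\d\mu_{\mathbb S}(\ell),
$$
where $Z^\ell$ solves \eqref{E1} with deterministic time-change $\ell$ in the noise. For an $\alpha/2$-stable subordinator, $S_t$ has a positive density on $(0,\8)$, and the set of paths $\ell$ with $\ell_t$ in any fixed window $[T_0,T_0+\delta]$ has positive $\mu_{\mathbb S}$-measure. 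For such a $\ell$, the inner Brownian probability is handled by the standard Stroock--Varadhan support theorem applied on the range $[0,\ell_t]$ of $\ell$: one requires $W_{\ell_s}\approx c(s)$ at times $s$ corresponding to jumps of $\ell$, which is an event about $W$ of positive Wiener measure, and between jumps of $\ell$ the noise $W_{\ell_s}$ is piecewise constant so the SDE is purely deterministic and controlled by the drift. Integrating over $\ell$ in the positive-measure set yields \eqref{E12}.

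The main technical obstacle lies in the last stage, namely in making the transition from ``Brownian motion close to a continuous control'' to ``subordinated Brownian motion driving the SDE to a target''. Since $\ell$ is pure-jump, $W_{\ell_\cdot}$ cannot be uniformly close to the smooth curve $c$, so one must argue at the level of the solution map rather than at the level of the noise: the drift absorbs the flat pieces of $\ell$, and the jumps of $W_{\ell_\cdot}$ contribute finitely many instantaneous increments of $V$ that can be tuned (through the choice of $\omega\in\mathbb W$) to stay close to the target trajectory. This is precisely the time-change mechanism for subordinated-Brownian SDEs developed in \cite{Zhang2,Zhang1,Zhang}; combining it with the deterministic control $u$ constructed above delivers the positive-probability lower bound.
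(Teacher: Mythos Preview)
Your first two stages---the explicit construction of the control $u$ via a smooth path $\phi$ in $\mathscr D(U)$ and the Gronwall stability estimate---are correct and match the paper (which cites \cite[Proposition~2.5]{HM} for the control but otherwise proceeds as you do). The genuine gap is in the third stage, where you assert that ``since $\ell$ is pure-jump, $W_{\ell_\cdot}$ cannot be uniformly close to the smooth curve $c$'' and that ``between jumps of $\ell$ the noise $W_{\ell_s}$ is piecewise constant''. Both claims rest on a misconception about the $\alpha/2$-stable subordinator: for $\alpha\in(0,2)$ its L\'evy measure has infinite total mass, so the jump times are a.s.\ dense in every interval and $\ell$ is a.s.\ strictly increasing with no flat pieces. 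There is no ``between jumps'' regime in which the SDE becomes deterministic, and your proposed decomposition into finitely many velocity kicks plus drift-controlled segments simply does not describe the driving path $s\mapsto W_{\ell_s}$.

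In fact your premise is false in the opposite direction: for $\mu_{\mathbb S}$-a.e.\ $\ell$ and every $\eta>0$ the event $\bigl\{\sup_{0\le s\le t}|W_{\ell_s}-u_s|\le\eta\bigr\}$ \emph{does} have positive $\mu_{\mathbb W}$-probability, and this is exactly the route the paper takes. The paper regularises $\ell$ by $\ell^\delta_s:=\delta^{-1}\int_s^{s+\delta}\ell_r\,\d r$, a continuous strictly increasing time change, so that $(W_{\ell^\delta_s})_{s\ge0}$ is a genuine Brownian motion with full support in $C([0,t];\R^d)$; positivity of the regularised tube event is then immediate, and letting $\delta\downarrow0$ (using $\ell^\delta_s\to\ell_s$ together with the continuity of $W$) transfers it to $\ell$ itself. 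Once this is available, the Gronwall comparison you already wrote---localised via the Lyapunov exit time $\tau^\ell_{R_t}$ so that $b$ is Lipschitz on the relevant sublevel set---gives $|Z_t^\ell-\hat Z_t|\le \|\si\|\e^{C_{R_t}t}\sup_{0\le s\le t}|W_{\ell_s}-u_s|$ and finishes the proof at the level of the noise, not the solution map. You should therefore discard the last paragraph of your plan and argue instead that the subordinated Brownian path can track $u$ uniformly.
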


\begin{proof}
For any $t>0$, set $Z_t:=(X_t,V_t)$.
Note that for  any $z,z^*\in\mathscr K$ and $t,\vv>0$,
\begin{equation*}
\P^z(|Z_t-z^*|\le \vv)=\int_{\mathbb S}\int_{\mathbb W}\I_{ \{|Z_t(z;\omega_{\ell_\cdot})-z^*|\le \vv\}}\,\mu_{\mathbb W}(\d\omega)\,\mu_{\mathbb S}(\d\ell).
\end{equation*}
So, in order to prove the   assertion \eqref{E12}, it suffices to verify that for any $z, z^*\in \mathscr{K}$, $\varepsilon>0$ and for $\mu_{\mathbb S}$-almost $\ell\in  \mathbb S$,
\begin{equation}\label{e:ver1}\Pp_{\mu_{\mathbb W}}^{z}(|Z_t^{\ell}-z^*|\le\vv)>0,\end{equation}  where $(Z_t^{\ell})_{t\ge0}$ is the solution to \eqref{e:SDE1} with $L_t=W_{S_t}$ replaced by $W_{\ell_t}$.

For any $\delta>0$ and $t\ge0$,   define the regular version of $\ell\in \mathbb S$ by
$$\ell^\delta_t=\frac{1}{\delta}\int_t^{t+\delta}\ell_s\,\d s=\frac{1}{\delta}\int_0^\delta\ell_{t+r}\,\d r.$$ It is clear that $\ell^\delta_t\to \ell_t$ as $\delta\to 0$; moreover, the function $t\mapsto \ell_t^\delta$ is continuous and strictly increasing. Consider the following ODE
\begin{equation}\label{e:SDE2}\d \hat Z_t=b(\hat Z_t)\,\d t+\si\,\d u_t,\end{equation} where $u\in C(\R_+;\R^{d})$, and $b$  and $\si$ are defined as in \eqref{e:not}.
 We claim that for any $t,\eta>0$, one can find $u\in C(\R_+;\R^{d})$ such that the associated solution $(\hat Z_t)_{t\ge0}$ to the ODE \eqref{e:SDE2} satisfies that $\hat Z_0=z$ and $\hat Z_t=z^*$, and
 \begin{equation}\label{E7}
\Pp_{\mu_{\mathbb W}}^{z}(A_\eta^\ell) >0,
\end{equation}
where
$$A_\eta^\ell:=\Big\{\omega\in \mathbb W: \sup_{0\le s\le t}|W_{\ell_s}(\omega)-u_s|\le \eta\Big \}.$$
 Indeed,
according to the proof of \cite[Proposition 2.5]{HM}, there is $u\in C(\R_+;\R^d)$ so that $(\hat Z_t)_{t\ge0}$ solving  \eqref{e:SDE2} satisfies that $\hat Z_0 =z $ and $\hat Z_t=z^*$. As mentioned above, for any $\delta>0$ the function $t\mapsto \ell_t^\delta$ is continuous and strictly increasing so $(W_{\ell^\delta_t})_{t\ge0}$ is still a Brownian motion with the covariation matrix  $ (\ell^\delta_t\I_{d\times d} )_{t\ge0}$. Since the support of a $d$-dimensional Brownian motion is the whole space $C(\R_+; \R^d)$, for any $t,\eta>0,$
 $$\Pp_{\mu_{\mathbb W}}^{z}\Big( \sup_{0<s\le t}|W_{\ell^\delta_s}(\omega)-u_s|\le \eta\Big)>0.$$
 On the other hand, since $\ell^\delta_t\to \ell_t$ as $\delta\to 0$, $W_{\ell^\delta_t}\to W_{\ell_t}$ a.s. for every $t>0$. Therefore, \eqref{E7} holds true by using
the fact that, for all $\ell\in \mathbb S$, the discontinuous points on $[0,t]$ for any $t\ge0$ is at most countably infinite.

 For any $r>0,$ let
$$
 E_r=\{ z\in\mathscr K: \mathcal V( z)\le r\}.
$$
Since $z\mapsto b(z)$ is locally Lipschitz on $\mathscr{K}$, there exists a constant $C_r>0$ such that
\begin{equation}\label{E8}
|b(z_1)-b(z_2)|\le C_r|z_1-z_2|,\quad z_1,z_2\in E_r.
\end{equation}
Below, let
$$R_t=R+\sup_{0\le s\le t}\mathcal V(\hat Z_s),\quad \eta_t=\frac{\vv}{ \|\si\|\e^{C_{R_t}t}}.$$

By following the argument to derive \eqref{E11}, we find that for $\mu_{\mathbb S}$-almost $\ell\in\mathbb S$,
\begin{equation*}
\lim_{r\to\8}\tau_r^\ell=\8, \quad \Pp_{\mu_{\mathbb W}}-\mbox{a.s.},
\end{equation*}
where $\tau_r^\ell:=\inf\{s>0: \mathcal V(Z_s^\ell)> r\}$. Then, for each fixed $t>0$, there exists a constant $R>0$ sufficiently large  such that
 the event $ \{t<\tau_{R_t}^\ell\} $ will occur with positive probability.

 Note from $\hat Z_t=z^*$ that
 \begin{equation*}
\begin{split}
\Pp_{\mu_{\mathbb W}}^{z}(|Z_t^{ \ell}-z^*|\le\vv)&=\Pp_{\mu_{\mathbb W}}^{z}(|Z_t^{\ell}-\hat Z_t|\le\vv  )\\
&\ge \Pp_{\mu_{\mathbb W}}^{z}\Big(\big\{\I_{\{t<\tau_{R_t}^\ell\}}|Z_t^{\ell}-\hat Z_t|\le\vv \big\}\cap A_{\eta_t}^\ell\Big ).
\end{split}
\end{equation*}
Provided that
\begin{equation}\label{E10}
A_{\eta_t}^\ell\subseteq\Big\{\omega\in \mathbb W:\I_{\{t<\tau_{R_t}^\ell(\omega_{\ell_\cdot})\}}|Z_t^{\ell}(\omega_{\ell_\cdot})-\hat Z_t|\le\vv \Big\},
\end{equation}
 we readily have
\begin{equation*}
\Pp_{\mu_{\mathbb W}}^{z}(|Z_t^{ \ell}-z^*|\le\vv)\ge \Pp_{\mu_{\mathbb W}}^{z}(A_{\eta_t}^\ell).
\end{equation*}
This, together with \eqref{E7}, yields the desired assertion.

From \eqref{E8} and the definition of $R_t$,   we deduce  that for any $  s\le t< \tau_{R_t}^\ell$,
\begin{equation*}
  \begin{split}
  \big|Z_s^\ell-\hat Z_s^\ell \big |&\le\int_0^{s } | b(Z_r^\ell)-b(\hat Z_r))|\,\d r+\|\si\| \sup_{0\le s\le t}|W_{\ell_s}-u_s|\\
  &\le C_{R_t}\int_0^{s} \big|  Z_{r}^\ell - \hat Z_{r\wedge \tau_{R_t}^\ell} \big|\,\d r+\|\si\| \sup_{0\le s\le t}|W_{\ell_s}-u_s|.
  \end{split}
\end{equation*}
Subsequently, applying Gronwall's inequality yields on the event  $  \{t< \tau_{R_t}^\ell\}$,
\begin{equation*}
\big|Z_t^\ell-\hat Z_t^\ell\big|\le\|\si\| \sup_{0\le s\le t}|W_{\ell_s}-u_s| \e^{C_{R_t} t}.
\end{equation*}
Consequently, by taking the alternative of $\eta_t$ into consideration, the inclusion \eqref{E10} is valid  when the event
 $  \{t< \tau_{R_t}^\ell\}$ takes place. Therefore, the proof is completed.
\end{proof}

\section{Proof of Theorem \ref{thm1} and General Result}\label{section4}
We first present the

\begin{proof}[Proof of Theorem $\ref{thm1}$] Suppose that the assumptions in the theorem holds.  In terms of Examples \ref{ex1} and \ref{ex2}, we know that for the potentials $U(x)$ given in (i) and (ii) of the theorem, the Lyapunov condition holds; moreover, by the proofs of Propositions \ref{pro1} and \ref{pro2}, the associated  Lyapunov function satisfies the properties of the function $V$ mentioned in the theorem.

Furthermore, according to Propositions \ref{strong} and \ref{irre},
the process  $({\bf X}_t,{\bf V}_t)_{t\ge0}$ defined by \eqref{E1} has the strong Feller and irreducible properties. Thus, by carrying out a parallel  argument of
\cite[Corollary 5.3]{HM}, where the path was laid out in that of  \cite[Lemma 2.3]{MSH},
 the locally uniform minorization condition reminiscent of Doeblin's condition is examined.
 Indeed, according to the irreducible property (see Proposition \ref{irre}), one can see that \cite[Assumption 2.1 (i)]{MSH} is satisfied. On the other hand, by the strong Feller property (see Proposition \ref{strong}) and its proof, we can verify that \cite[Assumption 2.1 (ii)]{MSH} is also fulfilled; more explicitly, as we claimed before, the second term on the right hand side of \eqref{strong-F} (i.e., $\Ee^{z} \big(f(Z_t^R)\I_{\{t<\tau_{\mathcal A_R}^*\}}\big)$ is associated with the so-called Dirichlet semigroup for the process $(Z_t^R)_{t\ge0}$, which possesses a continuous transition density function. Then, adopting the truncation argument and taking  \eqref{strong-F} into consideration again, we know that the semigroup of the original process  $({\bf X}_t,{\bf V}_t)_{t\ge0}$ also has a transition density function $\rho(t,z',z'')$ so that for every $t>0$, $(z',z'')\mapsto \rho(t,z',z'')$ is continuous on $\mathscr K\times \mathscr K$.

 With the aid of all the conclusions above, the desired assertion follows from Harris' theorem
(\cite[Theorem 1.2]{HMa}) or the proof of \cite[Theorem 2.3]{HM}. \end{proof}

\begin{remark}Since the coefficients of the SDE \eqref{E1} is locally Lipschitz continuous on $\mathscr{K}=\mathscr{D}(U)\times \R^d$ and the Lyapunov condition (see Proposition \ref{pro1} and Proposition \ref{pro2}) holds for the associated infinitesimal generator $\mathscr{L}$, we can verify that for any initial point $(x,v)\in \mathscr{K}$, there is a unique pathwise strong solution $({\bf X}_t,{\bf V}_t)_{t\ge0}$ to the SDE \eqref{E1}. Moreover, $({\bf X}_t,{\bf V}_t)\in \mathscr{K}$ almost surely for all $t>0$. For the detailed proof of the statement above, the reader can be referred to that of \cite[Proposition 2.4]{LM}. \end{remark}

 In additional  to  the main result on the specific setting (i.e., Theorem \ref{thm1}), below we present a general result on exponential ergodicity of \eqref{E1} to conclude this section. More precisely,

\begin{theorem}\label{thm2}
Assume that $({\bf H}_U)$ and $({\bf H}_\nu)$ or $({\bf H}_V)$, $({\bf H}_K)$ and $({\bf H}_\nu)$ hold true. Then, the process $({\bf X}_t,{\bf V}_t)_{t\ge0}$ solving  \eqref{E1} with $({\bf Z}_t)_{t\ge0}$ being a cylindrical symmetric stable process as that in Theorem $\ref{thm1}$
is exponentially ergodic in the sense that the process $({\bf X}_t,{\bf V}_t)_{t\ge0}$ has a  unique invariant probability measure $\mu$, and that there are a constant $\lambda$ and a positive function $C(x,v)$ so that for all $(x,v)\in \mathscr{K}:=\mathscr D(U)\times\R^{Nd}$ and $t>0$,
$$\|P_t((x,v),\cdot)-\mu\|_{V}\le C(x,v)\e^{-\lambda t},$$ where $V(x,v)\ge 1$ has the properties that  in Theorem $\ref{thm1}$.
\end{theorem}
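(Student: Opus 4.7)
The plan is to run exactly the same Harris'-theorem argument used for Theorem \ref{thm1}, but with the concrete Lennard-Jones/Coulomb verifications (Examples \ref{ex1} and \ref{ex2}) replaced by the abstract hypotheses. In other words, the statement reduces to checking the three standard ingredients---Lyapunov drift, strong Feller, topological irreducibility---and then invoking Harris' theorem to produce the weighted total variation decay.

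First, I would read off the Lyapunov condition directly from Section \ref{sec2}. Under $({\bf H}_U)$ and $({\bf H}_\nu)$, Proposition \ref{pro1} furnishes the function $\mathcal V_{\kk,\theta}(x,v)=V_\kk(x,v)^{\theta/2}$ satisfying $\mathscr L\mathcal V_{\kk,\theta}\le -\lambda_{\mathcal V}\mathcal V_{\kk,\theta}+C_{\mathcal V}$, while under $({\bf H}_V)$, $({\bf H}_K)$ and $({\bf H}_\nu)$ the analogous bound is given by Proposition \ref{pro2}. In both cases the two-sided comparison displayed after \eqref{E4} (respectively \eqref{*E10}) shows the Lyapunov function is comparable to $(|v|^2+U(x))^{\theta/2}$ as $|v|^2+U(x)\to\infty$, which is exactly the growth of the weight $V$ demanded in the conclusion of Theorem \ref{thm2}.

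Next, I would transfer the regularity and irreducibility statements of Section \ref{section3} to the $N$-particle, $\alpha_i$-stable setting. Propositions \ref{strong} and \ref{irre} were written for $N=1$ and a single symmetric $\alpha$-stable noise, but, as remarked at the start of Section \ref{section3}, the arguments go through verbatim for $N>1$ and independent components with possibly different indices $\alpha_i\in(0,2)$: the H\"ormander bracket computation still produces full rank $2Nd$ because the $\sigma_i$ and $[b_R,\sigma_i]$ decouple across particles, so \cite[Theorems 1.1 and 1.3]{Zhang} still deliver a continuous Dirichlet heat kernel for the truncated generator; and the subordinated-Brownian-motion trick in Proposition \ref{irre} applies particle-by-particle since each $Z^{(i)}$ is a subordinated Brownian motion independent of the others. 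The truncation identity \eqref{strong-F}, combined with the Lyapunov control on the exit probability $\Pp^z(\tau_{\mathcal A_R}\le t)$ (inequality \eqref{E11}), then yields both the strong Feller property and the existence of a jointly continuous transition density $\rho(t,z',z'')$ for the full process on $\mathscr K\times\mathscr K$.

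Finally, I would combine these three ingredients in the manner of the proof of Theorem \ref{thm1}. Joint continuity of $\rho(t,\cdot,\cdot)$ together with Lebesgue irreducibility (Proposition \ref{irre}) gives the locally uniform minorization (Doeblin-type) condition on each sublevel set $\{\mathcal V\le R\}$: for every compact $\mathcal K\subset\mathscr K$ and every nonempty open ball $B\subset\mathscr K$, $\inf_{z\in\mathcal K}P_t(z,B)>0$, which upgrades to a small-set minorization on Lyapunov sublevel sets exactly as in \cite[Corollary 5.3]{HM} and \cite[Lemma 2.3]{MSH}. With the Lyapunov inequality and this small-set condition in hand, Harris' theorem (\cite[Theorem 1.2]{HMa}) yields a unique invariant probability measure $\mu$ together with the weighted total variation bound $\|P_t((x,v),\cdot)-\mu\|_V\le C(x,v)\e^{-\lambda t}$, where $V$ inherits the asymptotic $(|v|^2+U(x))^{\theta/2}$ from the Lyapunov function. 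The only step requiring genuine care---and hence the main obstacle---is the clean passage from ``jointly continuous density plus Lebesgue irreducibility'' to the uniform-in-initial-condition minorization on sublevel sets; this is precisely the point where a compactness/positivity argument has to be run on the (generally non-compact) set $\mathscr K=\mathscr D(U)\times\R^{Nd}$, but because the Lyapunov sublevel sets are compact in $\mathscr K$ and bounded away from $\partial\mathscr D(U)$, the continuity of $\rho$ suffices.
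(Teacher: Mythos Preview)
Your proposal is correct and follows essentially the same route as the paper: invoke Propositions \ref{pro1} or \ref{pro2} for the Lyapunov drift, observe that Propositions \ref{strong} and \ref{irre} extend to the multi-particle cylindrical stable setting, and then repeat the Harris' theorem argument from the proof of Theorem \ref{thm1}. The paper's own proof is in fact just a two-sentence pointer to exactly these ingredients, so your write-up is if anything more detailed than the original.
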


\begin{proof}
With Propositions \ref{pro1} and \ref{pro2} at hand, the proof of Theorem \ref{thm2} can be finished exactly as that of Theorem \ref{thm1}
by keeping in mind that Propositions \ref{strong} and \ref{irre} are remain valid for the setting on multi-particle system.
Therefore, we herein   do not  go into detail.
\end{proof}

\ \

\noindent \textbf{Acknowledgements.}
The research of Jianhai Bao is   supported by NSF of China  (No.\  12071340).  The research of Rongjuan Fang is supported by NSF of China (No.\ 12201119).
The research of Jian Wang is supported by the National Key R\&D Program of China (2022YFA1000033) and  NSF of China (Nos.\ 11831014, 12071076 and 12225104).

\end{document}